\documentclass[11pt]{article}

\usepackage[left=1.2in,right=1.2in]{geometry}

\usepackage[utf8]{inputenc}
\usepackage[T1]{fontenc}
\usepackage{ifthen}
\usepackage{amssymb}
\usepackage{amsthm} 
\usepackage{bbm}
\usepackage{enumitem}

\usepackage{amsmath}
\usepackage{blkarray}

\usepackage[all]{xy}

\usepackage{xcolor}
\definecolor{notecolor}{rgb}{1,0,0}

\usepackage{tikz}
\usepackage{tkz-graph}
\usepackage{tikz-cd}
\usetikzlibrary{arrows.meta, positioning}
\usepackage{xparse}
\usepackage{mathrsfs}
\usepackage{graphicx}
\usepackage{hyperref}



\linespread{1.1}





\definecolor{linkcolor}{rgb}{0,0,0.8} 



{
\theoremstyle{plain}
\newtheorem{theorem}{Theorem}[section]
\newtheorem{lemma}[theorem]{Lemma}
\newtheorem{proposition}[theorem]{Proposition}
\newtheorem{corollary}[theorem]{Corollary}
\newtheorem{example}[theorem]{Example}
\newtheorem{definition}[theorem]{Definition}
\newtheorem{conjecture}[theorem]{Conjecture}
\newtheorem*{conjecture*}{Conjecture}

}

{
\theoremstyle{definition}

\newtheorem{remark}[theorem]{Remark}
}

\newcommand{\sizedescriptor}[2]
{
\ifthenelse{\equal{#1}{0}}{}{
\ifthenelse{\equal{#1}{1}}{\big}{
\ifthenelse{\equal{#1}{2}}{\Big}{
\ifthenelse{\equal{#1}{3}}{\bigg}{
\ifthenelse{\equal{#1}{4}}{\Bigg}{
#2}}}}}
}

\newcommand{\proven}[1]{\underline{#1} \\ \ \vspace{-2ex} \\}

\newcommand{\all}[1]{\forall #1 .\,}  

\newcommand{\df}[1]{\emph{\textbf{#1}}}  

\NewDocumentCommand{\set}
	{O{auto} m G{\empty}}
	{\sizedescriptor{#1}{\left}\{ {#2} \ifthenelse{\equal{#3}{}}{}{ \; \sizedescriptor{#1}{\middle}| \; {#3}} \sizedescriptor{#1}{\right}\}}

\newcommand{\pst}{\mathcal{P}}  


\newcommand{\NN}{\mathbb{N}}

\newcommand{\RR}{\mathbb{R}}

\newcommand{\intoo}[3][\RR]{{#1}_{(#2, #3)}}
\newcommand{\intcc}[3][\RR]{{#1}_{[#2, #3]}}
\newcommand{\intoc}[3][\RR]{{#1}_{(#2, #3]}}
\newcommand{\intco}[3][\RR]{{#1}_{[#2, #3)}}

\newcommand{\er}{\overline{\RR}}


\NewDocumentCommand{\oball}  
	{O{\empty} G{\empty} G{\empty}}
	{B_{#1}\ifthenelse{\equal{#2}{}}{}{\!\left(#2, #3\right)}}
\NewDocumentCommand{\cball}  
	{O{\empty} G{\empty} G{\empty}}
	{\overline{B}_{#1}\ifthenelse{\equal{#2}{}}{}{\!\left(#2, #3\right)}}
\NewDocumentCommand{\cth}  
	{O{\empty} G{\empty} G{\empty}}
	{\overline{\mathrm{th}}_{#1}\ifthenelse{\equal{#2}{}}{}{\!\left(#2, #3\right)}}

\newcommand{\rstr}[1]{\left.{#1}\right|}  
\newcommand{\parto}{\mathrel{\rightharpoonup}}  

\NewDocumentCommand{\dimg}  
	{O{\empty} m G{\empty}}
	{{#2}_*\ifthenelse{\equal{#3}{}}{}{\!\sizedescriptor{#1}{\left}( {#3} \sizedescriptor{#1}{\right})}}
\NewDocumentCommand{\pimg}  
	{O{\empty} m G{\empty}}
	{{#2}^*\ifthenelse{\equal{#3}{}}{}{\!\sizedescriptor{#1}{\left}( {#3} \sizedescriptor{#1}{\right})}}




\newcommand{\transpose}{{\mathsf{T}}}

\usepackage{mathtools}
\mathtoolsset{centercolon}

\newcommand{\mg}[1][]{\mathsf{mag}_{#1}}

\newcommand{\fsub}{\mathrm{Fin}}
\newcommand{\ifsub}{\fsub_{+}}
\newcommand{\csub}{\mathrm{Cmp}}
\newcommand{\icsub}{\csub_{+}}

\newcommand{\adj}{\mathrm{adj}}

\newcommand{\cl}[1][]{\mathrm{Cl}_{#1}}

\setlength{\parskip}{1em}
\setlength{\parindent}{0em}
\addtolength{\oddsidemargin}{-1.0cm}
\addtolength{\evensidemargin}{-1.0cm}
\addtolength{\textwidth}{2cm}
\addtolength{\topmargin}{-1cm}
\addtolength{\textheight}{2cm}

\title{Tractable Metric Spaces and Magnitude Continuity} 

\author{
Sara Kali\v{s}nik\thanks{
 Pennsylvania State University,
\texttt{skalisnik@psu.edu}}\phantom{x} and 
Davorin Le\v{s}nik\thanks{
University of Ljubljana,
\texttt{davorin.lesnik@fmf.uni-lj.si}}
}

\date{}

\begin{document}
\maketitle

\begin{abstract}
Magnitude is an isometric invariant of metric spaces introduced by Leinster. Since its inception, it has inspired active research into its connections with integral geometry, geometric measure theory, fractal dimensions, persistent homology, and applications in machine learning. In particular, when it comes to applications, continuity and stability of invariants play an important role. Although it has been shown that magnitude is nowhere continuous on the Gromov–Hausdorff space of finite metric spaces, positive results are possible if we restrict the ambient space. In this paper, we introduce the notion of \emph{tractable metric spaces}, provide a characterization of these spaces, and establish several continuity results for magnitude in this setting. As a consequence, we offer a new proof of a known result stating that magnitude is continuous on the space of compact subsets of~\(\mathbb{R}\) with respect to the Hausdorff metric. Furthermore, we show that magnitude is Lipschitz continuous when restricted to bounded subspaces of~\(\mathbb{R}\).
\end{abstract}


\section{Introduction}
Magnitude is a cardinality-like invariant of finite metric spaces introduced by Leinster in the 2010s~\cite{leinster2010}. It originated in category theory as a generalization of the notion of Euler characteristic to enriched categories. Magnitude was later extended to compact metric spaces in multiple ways in~\cite{leinster2010,LW13,Willerton2014,W09,meckes2013}.


Magnitude has been shown to encode many invariants from integral geometry and geometric measure theory, including volume, capacity, dimension, and intrinsic volumes~\cite{leinster2010,Meckes2015,Willerton2014,LW13}.  It has also been applied in other contexts --- for example, to quantify biodiversity~\cite{Solow1994,LC12}, in machine learning, to measure the intrinsic diversity of latent representations~\cite{magnitude-ML} and in connection with persistent homology~\cite{O18,OMALLEY2023107396,GH21}. While the question of continuity of magnitude is mathematically interesting in its own right~\cite{LM17,meckes2013,so2025maximumdiversityweightinginvariants,so2026convergencemagnitudefinitepositive}, it becomes especially crucial in the context of data analysis~\cite{katsumasa2025magnitudegenericallycontinuousfinite}, where a fundamental principle is that a good invariant should depend continuously on the input data.

Unfortunately, if we consider the set of isometry classes of finite metric spaces, equipped with the Gromov-Hausdorff metric, magnitude is not continuous~\cite{leinster2010,Roff25}. In fact, the authors in~\cite{katsumasa2025magnitudegenericallycontinuousfinite} show that magnitude is nowhere continuous on the Gromov-Hausdorff space of finite metric spaces. However, as they point out, continuity results are feasible if we restrict the ambient space.

To this end, we introduce the class of \emph{tractable metric spaces}. These are positive definite, Heine--Borel metric spaces in which all closed balls have finite magnitude. Examples include \( \ell_p^N \) for \( 1 \leq p \leq 2 \), finite-dimensional vector subspaces of~\(L^1\), and their closed subspaces. We demonstrate that tractable metric spaces have properties that are naturally connected with the question of continuity of magnitude. For example, in a locally compact metric space, if the magnitude is finite and continuous at some compact subspace, that subspace is necessarily contained in some tractable neighborhood. Among others, we also establish the following equivalence: the continuity of magnitude on the space of compact subspaces of a tractable metric space~$M$ is equivalent to the uniform continuity of magnitude on the space of finite subspaces of every bounded subspace \( B \subseteq M \), and, in turn, to uniform continuity on the space of finite subspaces within each closed ball \( \cball[M]{x}{r} \subseteq M \). We also explore the one-point property~\cite{Roff25,leinster2023spaces} in the context of tractable spaces. In particular, in a tractable star-shaped subset of a normed vector space, magnitude of compact/finite subsets is continuous at the star center if and only if the magnitude function of every non-empty compact/finite subset has the one-point property. We also show that in any tractable normed vector space, magnitude is continuous at every singleton.

These results build a framework for deriving continuity properties of magnitude in concrete settings. For example, we can deduce that magnitude is continuous on the class of compact metric subspaces of $\RR$ equipped with the Hausdorff metric (this statement has already appeared in the literature, see~\cite[Corollary 5.3.13]{LM17} and \cite[Corollary 2.12]{meckes2013}). We also prove that magnitude is Lipschitz continuous when restricted to compact subsets of bounded subspaces of~\(\mathbb{R}\) (Theorem~\ref{thm:continuity_on_R}).


The paper is organized as follows: in Subsection~\ref{sub:notation} we introduce the basic notation and conventions used throughout the paper. In Section~\ref{section:prelims} we review the basic definitions from the theory of magnitude: we begin by defining the magnitude of matrices, then extend the notion to finite metric spaces, and finally to compact metric spaces. 
Section~\ref{section:tractable_spaces} is devoted to tractable metric spaces. We begin by defining this class of spaces and providing examples. We then study their properties and explore their connection to the continuity of magnitude. This section lays the foundation for the continuity results established in Section~\ref{section:continuity_of_magnitude}.
Finally, in Section~\ref{section:continuity_of_magnitude}, we study the continuity properties of magnitude on finite and compact subspaces of the real line. We show that magnitude is continuous on the space of compact subspaces of~$\RR$, equipped with the Hausdorff metric. We further prove that magnitude is Lipschitz continuous when restricted to bounded subspaces of~$\RR$. We conclude the paper with Section~\ref{section:contribution}, where we summarize the results and list some open questions pertaining to the continuity of magnitude in the context of tractable metric spaces.

\subsection{Notation}\label{sub:notation}

We denote the set of natural numbers by~$\NN$. We treat~$0$ as a natural number, so $\NN = \set{0, 1, 2, 3,\ldots}$.

We denote the set of real numbers by~$\RR$, and the set of extended real numbers (with infinities included) by~$\er$. That is, $\er = \RR \cup \set{-\infty, \infty}$.

Subsets, given by a relation, are denoted by that relation in the index; for example, $\RR_{\geq 0}$ is the set of non-negative real numbers. Intervals between two numbers are denoted by these two numbers in brackets and in the index. Round, or open, brackets $(\ )$ denote the absence of the boundary in the set, and square, or closed, brackets $[\ ]$ its presence. For example, $\intcc{0}{1} = \set{x \in \RR}{0 \leq x \leq 1}$ is the usual closed unit interval, and $\intco[\NN]{5}{10} = \set{n \in \NN}{5 \leq n < 10} = \set{5, 6, 7, 8, 9}$.

The powerset of a set~$A$ (the set of all subsets of~$A$) is denoted by~$\pst(A)$. We denote the set of finite subsets of~$A$ by~$\fsub(A)$, and the set of non-empty finite subsets by~$\ifsub(A)$. If $A$ is also equipped with a topology, then $\csub(A)$ denotes the set of compact subspaces of~$A$, and $\icsub(A)$ the set of non-empty compact subspaces of~$A$.

A function $f$ mapping from a set~$A$ to a set~$B$ is denoted as $f\colon A \to B$. If $f$ is merely a partial map (not necessarily defined on the whole~$A$), we write this as $f\colon A \parto B$.


Following the notation from~\cite{leinster2023spaces}, for any $p \in \er_{\geq 1}$ and $N \in \NN$, let $\ell_p^N$ denote the Banach space~$\RR^N$, equipped with the $p$-norm (and the induced $p$-metric), and let $\ell_p$ denote the infinite-dimensional version of this space. Moreover, $L_1$ is the shorthand for $L_1\big(\intcc{0}{1}, \RR\big)$, i.e.\ the space of (equivalence classes of) measurable functions $\intcc{0}{1} \to \RR$, equipped with the usual integral $1$-norm.

Given a metric space~$M$ with a metric~$d$,
\begin{itemize}
\item
the open ball in~$M$ with the center in $x \in M$ and radius $r$ is denoted by $\oball[M]{x}{r}$, likewise for the closed ball $\cball[M]{x}{r}$;
\item
the closure of a subset $A \subseteq M$ in~$M$ is denoted by~$\cl[M](A)$;
\item
the (closed) $r$-thickening of~$A$ in~$M$ for a subset $A \subseteq M$ and $r \in \RR_{\geq 0}$ is denoted as
\[\cth[M](A, r) := \set{x \in X}{d(x, A) \leq r}.\]
\end{itemize}

\section{Preliminaries}\label{section:prelims}

First, we review some basic definitions and standard results concerning the magnitude of matrices, as well as the magnitude of finite and compact subsets of metric spaces~\cite{leinster2010,meckes2013}.

\subsection{The Magnitude of a Matrix} 

The magnitude of a matrix serves as the foundation for the notion of magnitude in metric spaces. This concept, originally introduced in the context of enriched category theory~\cite{leinster2010}, assigns a real number to certain square matrices via the notion of weightings.

\begin{definition} 
A vector $w \in \RR^n$ is called a \df{weighting} for a matrix $A \in \RR^{n \times n}$ when $A w = 1_n$, where $1_n \in \RR^n$ is the vector with all components equal to~$1$. A \df{coweighting} is a weighting of the transposed matrix~$A^\transpose$.
\end{definition}

A matrix may or may not have a weighting, and independently may or may not have a coweighting (for example, $\begin{psmallmatrix} 1 & 0 \\ 1 & 0 \end{psmallmatrix}$ has infinitely many weightings, but no coweightings). However, a standard argument shows that if a matrix~$A$ has at least one weighting and at least one coweighting, then the sum of components is the same for all weightings and all coweightings~\cite[Lemma~1.1.2]{leinster2010}. This sum is called the magnitude of~$A$.

\begin{definition}
Suppose there exists a weighting~$w$ and a coweighting~$v$ for a matrix $A \in \RR^{n \times n}$. The \df{magnitude} of matrix~$A$ is defined to be
\[\mg(A) := \sum_{i \in \intcc[\NN]{1}{n}} w_i,\]
where $w_i$ is the $i$-th component of the weighting~$w$. This gives us, for each $n \in \NN$, a partial map $\mg \colon \RR^{n \times n} \parto \RR$.
\end{definition}

We make the following remarks with regard to this definition.

\begin{itemize}
\item
The magnitude of a matrix is typically denoted by~$|A|$. To avoid confusion with absolute value notation, we instead write $\mg(A)$ for the magnitude of~$A$.
\item
If a matrix is symmetric, then it has a weighting if and only if it has a coweighting. Thus, for symmetric matrices, one needs only to require a weighting for the magnitude to be defined. In this paper, all matrices in question will be symmetric, as they will be similarity matrices of metric spaces. However, if one studies more general Lawvere metric spaces, having magnitude for non-symmetric matrices is important.
\item
If $A \in \RR^{n \times n}$ is invertible, then it has a unique weighting $A^{-1} 1_n$, and a unique coweighting $(A^{-1})^\transpose 1_n$. It follows that
\[\mg(A) = \frac{\mathrm{sum}(\adj(A))}{\det(A)}.\]
Thus the partial map $\mg\colon \RR^{n \times n} \parto \RR$ restricts to a \emph{total} map $\mg \colon GL_n(\RR) \to \RR$. It is clear from the formula that this restriction is continuous.
\item
If $A \in \RR^{n \times n}$ is positive definite, then it is symmetric and invertible. 
\item
The case $n = 0$ is allowed; the unique $(0 \times 0)$-matrix (the ``empty matrix'') is positive definite, its determinant is~$1$, its adjugate and inverse are again the empty matrix, and its magnitude is a sum of zero summands, so~$0$.
\end{itemize}

We can see that while the magnitude is only partially defined on the space of all square matrices, it is a well-behaved and continuous function on several important subclasses, including invertible and positive definite matrices. 

\subsection{The Magnitude of Finite Metric Spaces}\label{subsection:finite-metric-space-magnitude}

In this subsection, we recall the definition of magnitude of a finite metric space~\cite{leinster2010}.

\begin{definition}\label{definition:finite-metric-space-magnitude}
Let $M = \set{a_1, \ldots, a_n}$ be equipped with a metric~$d$. The matrix $Z_M \in \RR^{n \times n}$, given by $(Z_M)_{i, j} := e^{-d(a_i, a_j)}$, is called the \df{similarity matrix} of~$M$.
The \df{magnitude} of the metric space~$M$, denoted by $\mg(M)$, is defined to be the magnitude of its similarity matrix.
\end{definition}

Note that for every finite metric space, its similarity matrix is symmetric. However, not every similarity matrix has a weighting, so the magnitude is not defined for every finite metric space.

In general, there are many different ways to list elements of a finite metric space: we can permute and repeat elements. Nevertheless, the magnitude turns out to be independent of this choice; in fact, it is an isometric invariant of finite metric spaces.

\begin{example}
The magnitude of the empty space is~$0$, the magnitude of a singleton is~$1$, and for two points at distance~$u \in \RR_{> 0}$, the magnitude is $\frac{2}{1 + e^{-u}} = 1 + \tanh(\frac{u}{2})$ where $\tanh$ denotes the hyperbolic tangent~\cite[Examples~2.1.1]{leinster2010}. More generally, if $a_1 < a_2 < \ldots < a_n$ are real numbers (equipped with the usual metric), then $\mg(\set{a_1, \ldots, a_n}) = 1 + \sum_{i \in \intco[\NN]{1}{n}} \tanh(\frac{a_{i+1} - a_i}{2})$~\cite[Corollary~2.3.4]{leinster2010}.
\end{example}

However, we are rarely interested in magnitude of a single metric space at a time. More commonly studied (and much more informative) is the so-called magnitude function. Given a metric space~$M$ and $t \in \RR_{> 0}$, the metric space $t\;\!M$ is defined to have the same underlying set as~$M$, but all distances are multiplied by~$t$. The \df{magnitude function} of a finite metric space~$M$ is defined to be the partial map $\RR_{> 0} \parto \RR$, $t \mapsto \mg(t\;\!M)$~\cite{leinster2010}. It turns out that for every finite metric space, its magnitude function is defined for all but finitely many values, and is continuous on its domain of definition.

The definition of magnitude readily generalizes to pseudometric space (where two different points can be at zero distance). In this sense, the magnitude function can be extended to $t = 0$. It is easy to calculate that for any non-empty finite metric space~$M$ we have $\mg(0\;\!M) = 1$. This extended magnitude function is no longer necessarily continuous, however; in fact, the discontinuity can be arbitrarily bad~\cite[Theorem~3.8]{Roff25}. A non-empty finite metric space~$M$ is said to have the \df{one-point property} (since $0\;\!M$ looks like one point) when its magnitude function is continuous at $t = 0$, i.e.~when $\lim_{t \to 0} \mg(t\;\!M) = 1$. Which spaces have the one-point property is still a topic of ongoing research.

In this paper we are interested in a more general form of magnitude continuity. For any metric space~$M$, we can study the magnitude of all its finite subspaces. This gives us a partial function $\mg[\fsub(M) ]\colon \fsub(M) \parto \RR$ (where $\fsub(M)$ denotes the set of all finite subsets of~$M$). This could rightfully be also called ``magnitude function'', but to avoid any confusion with the aforementioned notion, we just refer to it as ``magnitude''. 


For any metric space $M$ with a metric~$d$, we consider $\fsub(M)$ to be equipped with the Hausdorff metric~$d_H$. This is only an extended metric, however, since the empty subset is at infinite distance from the others. For convenience, we will usually restrict ourselves to~$\ifsub(M)$, i.e.\ the set of all \emph{non-empty} subsets of~$M$, where the Hausdorff metric is indeed a metric. This does not change any discussion on continuity: since $\emptyset$ is an isolated point, any (partial) map with domain $\fsub(M)$ is continuous if and only if its restriction to~$\ifsub(M)$ is.


Magnitude for finite subspaces can be seen as a generalization of the magnitude function (aside from the trivial case of the empty space whose magnitude function is constantly~$0$), as we now demonstrate.

\begin{proposition}
Let $M$ be a finite metric space with metric~$d$ and let $a \in M$. Then
\[\rho\big((x, t), (y, u)\big) := \begin{cases} t \cdot d(x, y) & \text{if $t = u$}, \\ t \cdot d(x, a) + u \cdot d(y, a) + |t - u| & \text{if $t \neq u$} \end{cases}\]
is a metric on $M \times \RR_{> 0}$. Moreover, for this metric, for every $t \in \RR_{> 0}$ the map $t\;\!M \to M \times \RR_{> 0}$, $x \mapsto (x, t)$ is an isometric embedding.
\end{proposition}

\begin{proof}
We clearly have $\rho\big((x, t), (x, t)\big) = 0$. Conversely, assume $\rho\big((x, t), (y, u)\big) = 0$. If we had $t \neq u$, then $\rho\big((x, t), (y, u)\big) = t \cdot d(x, a) + u \cdot d(y, a) + |t - u| \geq |t - u| > 0$, a contradiction, so $t = u$. Hence $0 = \rho\big((x, t), (y, u)\big) = t \cdot d(x, y)$, and since $t > 0$, we conclude $d(x, y) = 0$. Thus $x = y$, so $(x, t) = (y, u)$.

Obviously, $\rho$ is symmetric. It remains to verify the triangle inequality. Take any $(x, t), (y, u), (z, v) \in M \times \RR_{> 0}$.

Case~$t = u = v$:
\[\rho\big((x, t), (y, u)\big) + \rho\big((y, u), (z, v)\big) = t \cdot d(x, y) + u \cdot d(y, z) =\]
\[= t \cdot \big(d(x, y) + d(y, z)\big) \geq t \cdot d(x, z) = \rho\big((x, t), (z, v)\big).\]

Case~$t \neq u = v$:
\[\rho\big((x, t), (y, u)\big) + \rho\big((y, u), (z, v)\big) = t \cdot d(x, a) + u \cdot d(y, a) + |t - u| + u \cdot d(y, z) =\]
\[= t \cdot d(x, a) + u \cdot \big(d(y, a) + d(y, z)\big) + |t - u| \geq\]
\[\geq t \cdot d(x, a) + u \cdot d(z, a) + |t - u| = \rho\big((x, t), (z, v)\big).\]

Case~$t = u \neq v$ is treated in the same way.

Case~$t \neq u \neq v$, $t = v$:
\[\rho\big((x, t), (y, u)\big) + \rho\big((y, u), (z, v)\big) =\]
\[= t \cdot d(x, a) + u \cdot d(y, a) + |t - u| + u \cdot d(y, a) + v \cdot d(z, a) + |u - v| \geq\]
\[\geq t \cdot \big(d(x, a) + d(z, a)\big) \geq t \cdot d(x, z) = \rho\big((x, t), (z, v)\big).\]

Case~$t \neq u \neq v$, $t \neq v$:
\[\rho\big((x, t), (y, u)\big) + \rho\big((y, u), (z, v)\big) =\]
\[= t \cdot d(x, a) + u \cdot d(y, a) + |t - u| + u \cdot d(y, a) + v \cdot d(z, a) + |u - v| \geq\]
\[\geq t \cdot d(x, a) + v \cdot d(z, a) + |t - v| = \rho\big((x, t), (z, v)\big).\]

The fact that the map $t\;\!M \to M \times \RR_{> 0}$, $x \mapsto (x, t)$ is an isometric embedding is immediate from the definition of~$\rho$.
\end{proof}

It follows from this proposition that the magnitude function of~$M$ is a restriction of $\mg[\ifsub(M \times \RR_{> 0})]$, in the sense that $\mg(t\;\!M) = \mg[\ifsub(M \times \RR_{> 0})](M \times \set{t})$. If $M$ is a finite subspace of a normed vector space~$V$, we do not even need a product space, we simply have $\mg(t\;\!M) = \mg[\ifsub(V)](t \cdot M)$.

\subsection{Positive Definite Metric Spaces}

One way to guarantee that a finite metric space has magnitude is to require that it has a positive definite similarity matrix; such a metric space is also called positive definite. It follows from Sylvester's criterion that its every subspace has this property as well. Hence, the following is a generalization of this property to not necessarily finite metric spaces.

\begin{definition}
A metric space~$M$ is \df{positive definite} when its every finite subspace has a positive definite similarity matrix.
\end{definition}

\begin{example}
A standard example of a positive definite space is $L_1$~\cite[Theorem~3.6]{meckes2013}. Consequently, every metric space which isometrically embeds into~$L_1$ is positive definite as well. This includes $L_p$ and $\ell_p^N$-spaces for $p \in \intcc{1}{2}$~\cite[Theorem 2.1]{Dor1976}.   
\end{example}

Magnitude is well behaved for positive definite metric spaces~\cite{meckes2013}. To begin with, if $M$ is positive definite, then its magnitude $\mg[\fsub(M)]\colon \fsub(M) \to \RR$ is a total map. Additionally, it satisfies the following important property: for all $F', F'' \in \fsub(M)$, if $F' \subseteq F''$, then $\mg[\fsub(M)](F') \leq \mg[\fsub(M)](F'')$. We say that positive metric spaces have \df{inclusion-monotone magnitude}. For the proof of this fact, see~\cite[
Corollary 2.4.4]{leinster2010}.

\subsection{The Magnitude of Compact Metric Spaces}

Because of inclusion-monotonicity of magnitude, there is a natural extension of the definition of magnitude to compact positive definite metric spaces. The idea is that a metric compact is totally bounded, and can thus be arbitrarily well approximated with finite subsets from below.

\begin{definition}\label{definition:compact-metric-space-magnitude}
Let $K$ be a compact positive definite metric space. Then its \df{magnitude} is defined as
\[\mg(K) := \sup\set[1]{\mg[\fsub(K) ](F)}{F \in \fsub(K)}.\]
\end{definition}

In principle, there is nothing stopping us to take this as the definition of the magnitude for any metric compact, but for general (non-positive definite) spaces, this is poorly behaved. In particular, for a finite metric space which is not positive definite, Definitions~\ref{definition:finite-metric-space-magnitude} and~\ref{definition:compact-metric-space-magnitude} might not yield the same magnitude~\cite{meckes2013}.

\begin{example}
Let $a, b \in \RR$, $a \leq b$. The magnitude of the interval~$\intcc{a}{b}$~\cite[Theorem~7]{LW13} with respect to the standard Euclidean metric is given by
\[
\mg(\intcc{a}{b}) = 1 + \frac{b - a}{2}.
\]
More generally~\cite[Corollary~5.4.3]{LM17}, if \( A = \bigcup_{i \in \intcc[\NN]{1}{n}} \intcc{a_i}{b_i} \subseteq \mathbb{R} \) is a finite union of closed bounded intervals such that $a_i \leq b_i < a_{i+1} \leq b_{i+1}$ for all $i \in \intco[\NN]{1}{n}$, then the magnitude is given by
\[
\mg(A) = 1 + \sum_{i \in \intcc[\NN]{1}{n}} \frac{b_i - a_i}{2} + \sum_{i \in \intco[\NN]{1}{n}} \tanh\big(\frac{a_{i+1} - b_i}{2}\big).
\]
\end{example}

Other definitions of magnitude for compact metric spaces have appeared in the literature~\cite{leinster2010,LW13,Willerton2014,W09,meckes2013}. They have been shown to be equivalent for positive definite spaces, but not general ones~\cite{meckes2013}. In this paper, all compact spaces whose magnitude we consider are positive definite, so Definition~\ref{definition:compact-metric-space-magnitude} will work fine.

This definition allows us, for a positive definite metric space~$M$, to extend the magnitude from $\fsub(M)$ to~$\csub(M)$. However, in general, the supremum in Definition~\ref{definition:compact-metric-space-magnitude} need not be finite (see~\cite[Theorem~2.1]{leinster2023spaces}). Thus, we obtain the magnitude as a (total) map by extending the codomain to $\er$: $\mg[\csub(M)]\colon \csub(M) \to \er$. However, in this paper, we will not be interested in spaces with infinite magnitude, so for a positive definite space~$M$, the magnitude $\mg[\csub(M)]\colon \csub(M) \parto \RR$ can potentially only be a partial map.

Just as in the finite case (Subsection~\ref{subsection:finite-metric-space-magnitude}), we equip $\csub(M)$ with the Hausdorff metric. Again, this is only an extended metric, so when convenient, we restrict ourselves to~$\icsub(M)$ (the set of non-empty compact subspaces of~$M$) where the Hausdorff metric is indeed a metric. By the same argument as before, this does not affect the questions of continuity.

Leinster and Meckes~\cite[Theorem~3.1]{leinster2023spaces} prove that every compact subspace of a finite-dimensional vector subspace of~$L_1$ (in particular, every finite subspace of~$L_1$) has finite magnitude and the one-point property. Consequently, spaces that isometrically embed into a finite-dimensional vector subspace of~$L_1$, including spaces $\ell_p^N$ for $p \in \intcc{1}{2}$ and $N \in \NN$, satisfy the same. 

However, a more general question remains open as to whether restrictions of $\mg[\csub(L_1)]$ to finite-dimensional subspaces are continuous~\cite[Conjecture~1.3]{katsumasa2025magnitudegenericallycontinuousfinite}. One of the purposes of this paper is to contribute to the answer.

\section{Tractable Spaces}\label{section:tractable_spaces}

In this section, we introduce ``tractable metric spaces'', a class of metric spaces that appears convenient for the study of continuity of magnitude, and examine their properties.

First, let us recall a weaker notion. In every metric space, every compact subspace is closed and bounded. Recall that a metric space is called \df{Heine-Borel} (or \df{proper}) when the converse also holds, i.e.\ when its every closed bounded subspace is compact. This is equivalent to every closed ball being compact~\cite[1 Definitions, Chapter H]{Petrunin2023}.

\begin{lemma}\label{lemma:heine-borel-spaces}
The following statements hold for any Heine-Borel metric space~$M$.
\begin{enumerate}
\item
The space $M$ is complete (hence, a completion of its any dense subspace).
\item
Given any dense subspace $D \subseteq M$, a map $D \to \RR$ has a (necessarily unique) continuous extension $M \to \RR$ if and only if its restriction to every bounded subset of~$D$ is uniformly continuous.
\item
For every $K \in \icsub(M)$ and $r \in \RR_{\geq 0}$, the thickening 
\[\cth[M](K, r) = \set{x \in M}{d(x, K) \leq r}\]
is compact.
\item
The space $\icsub(M)$, equipped with the Hausdorff metric, is Heine-Borel.
\end{enumerate}
\end{lemma}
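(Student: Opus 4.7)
Parts \textbf{(1)} and \textbf{(3)} are direct applications of Heine--Borel: for (1), any Cauchy sequence is bounded, hence contained in a compact closed ball, and a Cauchy sequence with a convergent subsequence converges; for (3), $\cth[M](K, r)$ is the preimage of $\intcc{0}{r}$ under the continuous map $x \mapsto d(x, K)$, hence closed, and is bounded because $\mathrm{diam}(\cth[M](K, r)) \leq \mathrm{diam}(K) + 2r < \infty$ (using that $K$ is compact, hence bounded), so Heine--Borel delivers compactness.

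For part \textbf{(2)}, the ``only if'' direction is immediate: any bounded $B \subseteq D$ has compact closure $\cl[M](B)$ by Heine--Borel, and a continuous map on a compact space is uniformly continuous, so the restriction to $B$ inherits uniform continuity. For the converse, given $x \in M$ I would fix a sequence $(x_n)$ in $D$ with $x_n \to x$, note it is bounded, and use uniform continuity of $f$ on a bounded set containing it to conclude $(f(x_n))$ is Cauchy in $\RR$. I define $\tilde f(x)$ to be its limit; a standard interleaving argument shows the value is independent of the chosen sequence, and $\tilde f|_D = f$ because one can take the constant sequence. To verify continuity of $\tilde f$ at $x$, I would fix $\epsilon > 0$, apply uniform continuity of $f$ on the bounded set $D \cap \oball[M]{x}{1}$ to obtain $\delta \in \intoo{0}{1}$, and then for any $y_1, y_2 \in \oball[M]{x}{\delta/3}$ approximate them by sequences in $D$ that eventually lie inside $D \cap \oball[M]{x}{1}$ at mutual distance below $\delta$; passing to the limit gives $|\tilde f(y_1) - \tilde f(y_2)| \leq \epsilon$.

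For part \textbf{(4)}, I would reduce to Blaschke's selection theorem. Let $\mathcal{F} \subseteq \icsub(M)$ be closed and bounded in the Hausdorff metric~$d_H$, and pick some $K_0 \in \mathcal{F}$ and $R \geq 0$ with $d_H(K, K_0) \leq R$ for all $K \in \mathcal{F}$. Then $K \subseteq \cth[M](K_0, R) =: C$ for every $K \in \mathcal{F}$, and $C$ is compact by (3). Hence $\mathcal{F} \subseteq \icsub(C)$, and Blaschke's theorem ensures $\icsub(C)$ is compact in its Hausdorff metric, which coincides with the one it inherits from $\icsub(M)$. Since $\mathcal{F}$ is closed in $\icsub(M)$, it is closed in $\icsub(C)$, hence compact.

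The main obstacle is the careful $\epsilon$-$\delta$ bookkeeping in (2): one must pass between approximating sequences in $D$ and limit points in $M$ while making sure the sequences stay within a single bounded subset of $D$ on which uniform continuity is available, and separately verify that $\tilde f$ is independent of the approximating sequence. Once this is handled, the remaining parts follow essentially automatically from the Heine--Borel hypothesis together with a single invocation of Blaschke's theorem.
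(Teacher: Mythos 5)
Your proposal is correct and follows essentially the same route as the paper: bounded Cauchy sequences sit in compact balls for (1), the thickening is a closed subset of a compact ball for (3), and (4) reduces to Blaschke's theorem applied to $\icsub\big(\cth[M](K_0,R)\big)$. The only cosmetic difference is in the converse of (2), where you build the extension explicitly via Cauchy sequences while the paper invokes the unique continuous extension of a uniformly continuous map to the closure of each ball and glues these local extensions over an open cover; both are the same standard argument.
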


\begin{proof}
\
\begin{enumerate}
\item
Any Cauchy sequence is bounded and therefore contained in some compact ball. Thus, it has an accumulation point which for a Cauchy sequence is necessarily its limit.
\item
Uniqueness of a possible continuous extension follows from the assumption that $D$ is dense.

The statement is obvious for $M = \emptyset$, so assume that we have some $a \in M$. Suppose $D \subseteq X$ is dense and $f\colon D \to \RR$ such that it has a continuous extension $g\colon M \to \RR$. If $B \subseteq D$ is bounded, we have some $r \in \RR_{\geq 0}$ such that $B \subseteq \cball[M]{a}{r}$. The restriction $\rstr{g}_{\cball[M]{a}{r}}$ is continuous, therefore also uniformly continuous since $\cball[M]{a}{r}$ is compact. Hence, the further restriction $\rstr{f}_{B} = \rstr{g}_{B}$ is also uniformly continuous.

Conversely, suppose that all restrictions of $f\colon D \to \RR$ to bounded subsets of~$D$ are uniformly continuous. In particular, for every $x \in D$, the restriction $\rstr{f}_{\cball[D]{x}{1}}$ is uniformly continuous, so it has a unique continuous extension $\tilde{g}_x\colon \cl[M](\cball[D]{x}{1}) \to \RR$. Since $\oball[M]{x}{1} \subseteq \cl[M](\cball[D]{x}{1})$, we can further define $g_x\colon \oball[M]{x}{1} \to \RR$ as $g_x := \rstr{\tilde{g}_x}_{\oball[M]{x}{1}}$. Note that $\set{\oball[M]{x}{1}}{x \in D}$ is an open cover of~$M$, and the continuous maps $g_x$ match on the intersection of their domains by uniqueness of continuous extensions, hence they determine a common continuous extension $g\colon M \to \RR$ which is in particular a continuous extension of~$f$.
\item
Take some $a \in K$; then $K \subseteq \cball{a}{\mathrm{diam}(K)}$ and $\cth[M](K) \subseteq \cball{a}{\mathrm{diam}(K) + r}$. By assumption $\cball{a}{\mathrm{diam}(K) + r}$ is compact, and then so is its closed subspace $\cth[M](K, r)$.
\item
Take any $K \in \icsub(M)$ and $r \in \RR_{> 0}$. Then for every $L \in \cball[\icsub(M)]{K}{r}$ we have $L \subseteq \cth[M](K, r)$. Since $\cth[M](K, r)$ is compact by the previous item, the space $\icsub\big(\cth[M](K, r)\big)$ is compact by~\cite[Theorem 7.3.8]{burago2001course}. Hence, so is its closed subspace $\cball[\icsub(M)]{K}{r}$.
\end{enumerate}
\end{proof}

We can now state the definition of a tractable metric space.

\begin{definition}
A metric space $M$ is called \df{tractable} when it satisfies the following:
\begin{itemize}
\item
$M$ is positive definite (hence, $\mg[\fsub(M)]\colon \fsub(M) \to \RR$ is total and inclusion-monotone),
\item
all its closed balls are compact (i.e.~$M$ is Heine-Borel) and have finite magnitude:
\[\all{x \in M}\all{r \in \RR_{> 0}}{\cball{x}{r} \in \csub(M) \land \mg[\csub(M)]\big(\cball{x}{r}\big) < \infty}\]
(hence, magnitude $\mg[\csub(M)]\colon \csub(M) \to \RR$ is total).
\end{itemize}
\end{definition}

There is a straightforward characterization for when a normed vector space is tractable.

\begin{proposition}\label{proposition:tractable-normed-spaces}
The following statements are equivalent for any real normed vector space~$V$.
\begin{enumerate}[label=(\roman*)]
\item
$V$ is tractable.
\item
$V$ is finite dimensional and can be isometrically embedded into~$L_1$.
\item
$V$ is isometrically isomorphic to a finite-dimensional vector subspace of~$L_1$.
\item
$V$ is finite dimensional, positive definite, and all closed balls in it have finite magnitude.
\item
$V$ is finite dimensional, positive definite, and there exists a closed ball (with positive radius) in it with finite magnitude.
\end{enumerate}
\end{proposition}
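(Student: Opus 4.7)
The plan is to prove the cycle $(iii) \Rightarrow (i) \Rightarrow (iv) \Rightarrow (v) \Rightarrow (iii)$, together with the straightforward equivalence $(ii) \Leftrightarrow (iii)$.

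For the easy implications: $(iv) \Rightarrow (v)$ holds by taking any closed ball of positive radius. For $(iii) \Rightarrow (i)$, a finite-dimensional normed space is Heine--Borel by Riesz's lemma; any subspace of~$L_1$ is positive definite, as noted in the example following the definition of positive-definite metric spaces; and by the Leinster--Meckes theorem quoted at the end of the preliminaries, every compact subspace of a finite-dimensional vector subspace of~$L_1$ has finite magnitude, which covers all closed balls of~$V$ (which are compact by Heine--Borel). The implication $(i) \Rightarrow (iv)$ is again Riesz's lemma: a Heine--Borel normed space must be finite-dimensional, and the remaining clauses of~(iv) are already part of tractability. Finally, $(iii) \Rightarrow (ii)$ is tautological.

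For the remaining implications $(v) \Rightarrow (iii)$ and $(ii) \Rightarrow (iii)$, I would invoke the classical theorem of Bretagnolle, Dacunha-Castelle, and Krivine: a real normed space~$V$ embeds linearly and isometrically into~$L_1$ if and only if the function $x \mapsto e^{-\|x\|}$ is positive definite on~$V$. In the setting of~(v), this latter condition is exactly the positive semi-definiteness of the similarity matrices of all finite subsets of~$V$, which is implied by the magnitude-positive-definiteness hypothesis; the finite-dimensional hypothesis then ensures that the image of the embedding is a finite-dimensional vector subspace of~$L_1$, giving~(iii). For $(ii) \Rightarrow (iii)$, any (a priori nonlinear) isometric embedding $V \hookrightarrow L_1$ transfers the positive definiteness of~$L_1$ to~$V$, since the similarity matrix of any finite subset of~$V$ coincides with that of its image in~$L_1$; hence the Bretagnolle--Dacunha-Castelle--Krivine theorem again applies and upgrades the embedding to a linear one. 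As a by-product, the hypothesis in~(v) that \emph{some single} ball has finite magnitude is actually redundant for the logical cycle: once~(iii) is established, all closed balls have finite magnitude automatically by the Leinster--Meckes result.

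The main obstacle is the appeal to Bretagnolle--Dacunha-Castelle--Krivine. Passing from the combinatorial condition that every similarity matrix is positive semi-definite to the existence of a \emph{linear} isometric embedding of~$V$ into~$L_1$ is a genuinely deep step, and I do not see a substantially more elementary route. In particular the Mazur--Ulam theorem alone does not suffice for $(ii) \Rightarrow (iii)$, because it requires surjectivity of the isometry, which fails when the image of a would-be embedding is merely a metric subspace of~$L_1$ rather than already a vector subspace.
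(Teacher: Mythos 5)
Your proof is correct, and its mathematical core coincides with the paper's: finite-dimensionality is identified with the Heine--Borel property, finite magnitude of balls comes from Leinster--Meckes (\cite[Theorem~3.1]{leinster2023spaces}), and the passage from ``positive definite normed space'' to ``linear isometric copy of a vector subspace of $L_1$'' rests on the Bretagnolle--Dacunha-Castelle--Krivine theorem --- which is exactly the content of \cite[Corollary~3.5]{meckes2013} that the paper cites for the implications $(i)\Rightarrow(ii)$ and $(ii)\Rightarrow(iii)$ (separability, which that corollary needs, holds for you because finite-dimensional spaces are separable, and for the paper because $V$ is a countable union of compact balls). The one genuine divergence is the treatment of $(v)$: the paper keeps the single-ball hypothesis doing real work, proving $(v)\Rightarrow(iv)$ by translating the given ball to the origin and invoking the scaling result \cite[Corollary~2.4]{leinster2023spaces} to propagate finiteness of magnitude to balls of all radii, whereas you route $(v)\Rightarrow(iii)$ through the embedding theorem using only finite-dimensionality and positive definiteness, and correctly observe that the finite-magnitude-ball clause in $(v)$ is logically redundant once the embedding and \cite[Theorem~3.1]{leinster2023spaces} are available. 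Your route trades the elementary scaling argument for a second appeal to the deep embedding theorem; the paper's version has the merit of showing that $(v)\Rightarrow(iv)$ holds by a self-contained magnitude argument. Both are valid.
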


\begin{proof}
Throughout the proof, we use the fact that a real normed vector space is Heine-Borel if and only if it is finite dimensional if and only if it is locally compact~\cite[Theorems 1.22, 1.23]{rudin1991functional} (and if a metric space is locally compact, so is its isometric copy).
\begin{itemize}
\item\proven{$(i) \implies (ii)$}
By assumption $V$ is Heine-Borel, so finite-dimensional.

We can write $V = \bigcup_{n \in \NN} \cball[V]{0}{n}$. Every compact metric space is separable, so as a countable union of such spaces, $V$ is separable as well. Hence, by~\cite[Corollary~3.5]{meckes2013}, $V$ isometrically embeds into~$L_1$.
\item\proven{$(ii) \implies (iii)$}
Since $V$ is finite-dimensional, it is locally compact and separable. Since it isometrically embeds into~$L_1$, it is positive definite. Hence, by~\cite[Corollary~3.5]{meckes2013}, there is a vector subspace $V' \subseteq L_1$ which is an isometric copy of~$V$. In particular, $V'$ is locally compact, so finite dimensional.
\item\proven{$(iii) \implies (iv)$}
Since $V$ is isometrically isomorphic to a finite-dimensional, therefore locally compact, vector space, it is itself finite-dimensional. Since $V$ is isometrically isomorphic to a subspace of~$L_1$, it is positive definite. Closed balls have finite magnitude by~\cite[Theorem~3.1]{leinster2023spaces}.
\item\proven{$(iv) \implies (i)$}
Every finite-dimensional real normed vector space is Heine-Borel.
\item\proven{$(iv) \implies (v)$}
Trivial.
\item\proven{$(v) \implies (iv)$}
Since translations are isometric isomorphisms, if we have $x \in V$, $r \in \RR_{> 0}$ such that $\mg[\csub(V)]\big(\cball[V]{x}{r}\big) < \infty$, then also $\mg[\csub(V)]\big(\cball[V]{0}{r}\big) < \infty$. It follows from~\cite[Corollary~2.4]{leinster2023spaces} that for every $R \in \RR_{> 0}$, the magnitude of~$\cball[V]{0}{R}$ is also finite. Every closed ball is contained in some~$\cball[V]{0}{R}$, so has finite magnitude by inclusion-monotonicity.
\end{itemize}
\end{proof}

The following proposition discusses tractability of subspaces.

\begin{proposition}\label{proposition:tractable-subspace}
The following holds for any metric space~$M$ and a subspace $A \subseteq M$.
\begin{enumerate}
\item
If $A$ is tractable, then it is closed in~$M$.
\item
If $M$ is tractable, then $A$ is tractable if and only if it is closed in~$M$.
\end{enumerate}
\end{proposition}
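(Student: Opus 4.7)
The plan is to prove the two items essentially in sequence, with item~$(1)$ being a quick consequence of Lemma~\ref{lemma:heine-borel-spaces}, and item~$(2)$ reducing to checking the three defining properties of tractability for a closed subspace of a tractable space.

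For item~$(1)$, I would argue that tractability of~$A$ includes the Heine--Borel property, which by Lemma~\ref{lemma:heine-borel-spaces}(1) makes $A$ complete. A complete subspace of any metric space is closed: given a convergent sequence in~$M$ with terms in~$A$, it is Cauchy, so it converges in~$A$ by completeness, and its limit in~$M$ must agree with the limit in~$A$ by uniqueness of limits. Hence $A$ is closed in~$M$.

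For item~$(2)$, the forward direction is immediate from~$(1)$. For the converse, assume $M$ is tractable and $A \subseteq M$ is closed. I would verify the three tractability conditions for~$A$ one by one. Positive definiteness is hereditary: every finite subspace of~$A$ is also a finite subspace of~$M$, so its similarity matrix is positive definite. For the Heine--Borel property, I would note that a closed ball in~$A$ can be written as $\cball[A]{x}{r} = \cball[M]{x}{r} \cap A$; since $\cball[M]{x}{r}$ is compact by the Heine--Borel property of~$M$ and $A$ is closed in~$M$, this intersection is closed in a compact space, hence compact.

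Finally, for finiteness of magnitude of closed balls, I would exploit inclusion-monotonicity. Given $x \in A$ and $r \in \RR_{> 0}$, every $F \in \fsub\big(\cball[A]{x}{r}\big)$ is also an element of $\fsub\big(\cball[M]{x}{r}\big)$ with the same induced metric, so $\mg[\fsub(A)](F) = \mg[\fsub(M)](F) \leq \mg[\csub(M)]\big(\cball[M]{x}{r}\big)$. Taking the supremum over $F$ and applying Definition~\ref{definition:compact-metric-space-magnitude} yields $\mg[\csub(A)]\big(\cball[A]{x}{r}\big) \leq \mg[\csub(M)]\big(\cball[M]{x}{r}\big) < \infty$, completing the verification. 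No step here is genuinely hard; the only mild subtlety is remembering that magnitude of finite subspaces is intrinsic (an isometric invariant), so the ambient-space distinction in the inequality above is purely formal.
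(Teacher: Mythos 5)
Your proof is correct and follows essentially the same route as the paper's: item~(2) is verified condition by condition exactly as in the paper (positive definiteness is hereditary, $\cball[A]{x}{r} = \cball[M]{x}{r} \cap A$ is compact as a closed subset of a compact set, and finiteness of magnitude follows from inclusion-monotonicity). The only difference is in item~(1), where you deduce closedness from completeness via Lemma~\ref{lemma:heine-borel-spaces}, whereas the paper argues directly that any point of $\cl[M](A)$ lies in the closure of some compact, hence closed, ball $\cball[A]{a}{1} \subseteq A$; both arguments are equally valid and rest on the same compactness of closed balls.
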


\begin{proof}
\
\begin{enumerate}
\item
Assume that $A$ is tractable, and take any $x \in \cl[M](A)$. Then there exists some $a \in A \cap \oball[M]{x}{1}$, and we have $x \in \cl[M](\cball[A]{a}{1})$. By assumption $\cball[A]{a}{1}$ is compact, therefore also closed in~$M$. Hence $x \in \cball[A]{a}{1} \subseteq A$. 
\item
The left-to-right implication holds by the previous item. Conversely, suppose that $A$ is closed in~$M$. Since $M$ is tractable, it is positive definite, and then so is the subspace~$A$. For any $a \in A$ and $r \in \RR_{> 0}$ we have $\cball[A]{a}{r} = \cball[M]{a}{r} \cap A$. The intersection of a compact and a closed subspace is compact. Also, by inclusion-monotonicity, since $\cball[M]{a}{r}$ has finite magnitude, so does $\cball[A]{a}{r}$.
\end{enumerate}
\end{proof}

We can now list standard examples of tractable spaces.

\begin{corollary}\label{corollary:tractable-examples}
\
\begin{enumerate}
\item
For every $p \in \intcc{1}{2}$ and every $N \in \NN$, the space~$\ell_p^N$ is tractable.
\item
Every finite-dimensional vector subspace of~$L_1$ is tractable.
\item
Every closed subspace of the above listed spaces is tractable.
\end{enumerate}
\end{corollary}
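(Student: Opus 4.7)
The plan is to deduce all three items directly from the two preceding propositions, essentially by bookkeeping. The main input is Proposition~\ref{proposition:tractable-normed-spaces}, which lists five equivalent conditions for a real normed vector space to be tractable; combined with Proposition~\ref{proposition:tractable-subspace}, almost nothing else has to be done.

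For item~(2), the space $V$ is by assumption a finite-dimensional vector subspace of $L_1$, so it satisfies condition~(iii) of Proposition~\ref{proposition:tractable-normed-spaces}. The equivalence $(iii) \Leftrightarrow (i)$ then gives that $V$ is tractable. For item~(1), I would invoke the fact (recorded earlier in the examples, citing~\cite{meckes2013}) that for $p \in \intcc{1}{2}$ and $N \in \NN$ the space $\ell_p^N$ embeds isometrically into $L_1$; it is also obviously finite dimensional. Thus $\ell_p^N$ satisfies condition~(ii) of Proposition~\ref{proposition:tractable-normed-spaces}, hence is tractable.

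For item~(3), I would appeal to Proposition~\ref{proposition:tractable-subspace}(2): if $M$ is tractable (as established in~(1) and~(2) for the spaces in question), then any subspace $A \subseteq M$ is tractable if and only if it is closed in~$M$. So any closed subspace of $\ell_p^N$ with $p \in \intcc{1}{2}$, or of a finite-dimensional vector subspace of $L_1$, is tractable.

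There is no real obstacle here: the entire content has been packaged into the two preceding propositions, and the corollary just lists the most natural instances to which they apply. The only minor care is to note that ``closed subspace'' in item~(3) is meant in the topological sense inside the ambient space, which is exactly the hypothesis required by Proposition~\ref{proposition:tractable-subspace}(2).
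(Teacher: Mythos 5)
Your proposal is correct and matches the paper's proof, which simply cites Propositions~\ref{proposition:tractable-normed-spaces} and~\ref{proposition:tractable-subspace}; you have merely spelled out which equivalent conditions are being invoked for each item.
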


\begin{proof}
By Propositions~\ref{proposition:tractable-normed-spaces} and~\ref{proposition:tractable-subspace}.
\end{proof}

As stated, the spaces $\ell_p^N$ are tractable for $p \in \intcc{1}{2}$. Recall that for $p \in \intoo{0}{1}$, the usual definition of $p$-``norm'' is not in fact a norm, but its $p$-th power does induce a metric on~$\RR^N$, denoted by~$d_p$. These spaces also often appear in the context of magnitude, so we characterize their tractability here.

\begin{proposition}
The following statements are equivalent for every $N \in \NN$ and $p \in \intoo{0}{1}$.
\begin{enumerate}
\item
The space $(\RR^N, d_p)$ is tractable.
\item
The space $(\RR^N, d_p)$ is positive definite.
\item
There exists a linear map $T\colon \RR^N \to L_p(\intcc{0}{1})$ such that $\|T x\|_p = \|x\|$ for every $x \in \RR^N$.
\end{enumerate}
\end{proposition}

\begin{proof}
The equivalence between (ii) and~(iii) is a known result~\cite[Theorem~4.10]{LM17}. Statement (i) implies~(ii) by definition. It remains to verify that (ii) implies~(i).

Note that every closed ball in $(\RR^N, d_p)$ is also bounded and closed in, say, the $\infty$-metric, and the metrics $d_p$ and $d_\infty$ are equivalent. Hence all closed $p$-balls are compact. That they have finite magnitude follows from~\cite[Proposition~4.13]{LM17}.
\end{proof}

Next, we characterize compact tractable spaces.

\begin{proposition}\label{proposition:compact-tractable-spaces}
The following statements are equivalent for a metric space~$M$.
\begin{enumerate}[label=(\roman*)]
\item
$M$ is tractable and bounded.
\item
$M$ is tractable and compact.
\item
$M$ is positive definite, compact, and has finite magnitude.
\end{enumerate}
\end{proposition}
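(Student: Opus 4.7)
The three implications all feel short, so the plan is to treat them cyclically $(i) \Rightarrow (ii) \Rightarrow (iii) \Rightarrow (i)$, with the main content lying in reducing ``finite magnitude of~$M$'' and ``finite magnitude of every closed ball'' to each other via a single enclosing ball argument and inclusion-monotonicity.

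For $(i) \Rightarrow (ii)$, I would observe that a tractable space is Heine-Borel by definition, and~$M$ is trivially closed in itself. If~$M$ is bounded, then it is a closed bounded subset of the Heine-Borel space~$M$, hence compact.

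For $(ii) \Rightarrow (iii)$, positive definiteness is immediate from tractability, and compactness is given. For finite magnitude, the empty case is trivial ($\mg(\emptyset) = 0$), so assume $M \neq \emptyset$ and pick any $x \in M$. Since $M$ is compact, it is bounded, hence contained in $\cball[M]{x}{r}$ for some sufficiently large $r \in \RR_{>0}$. Then $M = \cball[M]{x}{r}$ as a subspace of itself, and tractability provides $\mg[\csub(M)](\cball[M]{x}{r}) < \infty$.

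For $(iii) \Rightarrow (i)$, compactness of~$M$ immediately gives boundedness. Every closed ball $\cball[M]{x}{r}$ is a closed subset of the compact space~$M$, hence itself compact, so~$M$ is Heine-Borel. To see that every closed ball has finite magnitude, I would invoke inclusion-monotonicity. This was stated in the paper only for finite subspaces, but the argument extends at once to compacts: for $K' \subseteq K$ in $\csub(M)$ one has $\fsub(K') \subseteq \fsub(K)$, so taking suprema in Definition~\ref{definition:compact-metric-space-magnitude} gives $\mg[\csub(M)](K') \leq \mg[\csub(M)](K)$. Applied to $\cball[M]{x}{r} \subseteq M$ this yields $\mg[\csub(M)](\cball[M]{x}{r}) \leq \mg(M) < \infty$, completing tractability.

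There is no real obstacle here; the only point requiring a small remark is extending inclusion-monotonicity from finite to compact subspaces, which is immediate from the supremum definition. I would likely mention this in a single sentence rather than as a separate lemma.
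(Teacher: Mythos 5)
Your proposal is correct and follows essentially the same route as the paper: $(i)\Leftrightarrow(ii)$ via Heine--Borel plus ``$M$ is closed in itself,'' $(ii)\Rightarrow(iii)$ by noting $M$ equals a closed ball, and the reverse direction via compactness of closed subsets and inclusion-monotonicity. Your extra remark that inclusion-monotonicity extends from finite to compact subspaces by taking suprema is a small point of care the paper leaves implicit, but it is not a different argument.
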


\begin{proof}
\
\begin{itemize}
\item\proven{$(i) \iff (ii)$}
Since $M$ is Heine-Borel and closed in itself.
\item\proven{$(ii) \implies (iii)$}
Immediate if $M = \emptyset$. Otherwise, $M$ is equal to some closed ball, and we are done.
\item\proven{$(iii) \implies (ii)$}
Every closed subset of compact~$M$ is compact, and has finite magnitude by inclusion-monotonicity.
\end{itemize}
\end{proof}

We want to show that the tractability property is naturally connected with the question of continuity of magnitude. We present the following two observations as motivation. First, we claim that in a locally compact metric space (so that we have enough interesting compact subsets), if magnitude is (finite and) continuous at some point, that point is necessarily contained in some tractable neighbourhood. Second, we claim that in sufficiently nice tractable spaces, the one-point property is essentially continuity of magnitude at singletons.

\begin{proposition}
Let $M$ be a locally compact positive definite metric space. Suppose that magnitude $\mg[\icsub(M)]\colon \icsub(M) \parto \RR$ is (defined and) continuous at some $K \in \icsub(M)$. Then there exists a tractable neighborhood $T \subseteq M$ of~$K$ (in particular, $\mg[\icsub(T)] = \rstr{\mg[\icsub(M)]}_{\icsub(T)}$ is a total map).
\end{proposition}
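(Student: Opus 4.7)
The plan is to take $T$ to be a sufficiently small closed thickening of $K$, namely $T := \cth[M](K, r)$ for $r > 0$ chosen small enough to simultaneously guarantee compactness and finite magnitude. Once such a $T$ is produced, Proposition~\ref{proposition:compact-tractable-spaces} reduces tractability to checking three conditions: $T$ is compact, positive definite, and of finite magnitude. Positive definiteness is inherited from $M$ at no cost, so the work is in choosing $r$ correctly.

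For compactness, I would combine local compactness of $M$ with compactness of $K$ to produce some $r_0 > 0$ such that $\cth[M](K, r_0)$ is compact. The standard argument: cover $K$ by finitely many open sets with compact closure (using local compactness and compactness of $K$) to obtain an open neighborhood $U \supseteq K$ with $\cl[M](U)$ compact; the continuous map $x \mapsto d(x, M \setminus U)$ attains a positive minimum $r_0$ on the compact set $K$, so $\cth[M](K, r) \subseteq \cl[M](U)$ for every $r < r_0$, forcing $\cth[M](K, r)$ to be closed in a compact set, hence compact.

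For finite magnitude, continuity of $\mg[\icsub(M)]$ at $K$ supplies $\delta > 0$ such that magnitude is defined and bounded (say by $\mg(K) + 1$) on the open Hausdorff ball of radius $\delta$ around $K$. Fixing any $r \in \intoo{0}{\min(r_0, \delta)}$ and letting $T := \cth[M](K, r)$, one has $K \subseteq T$ and $d_H(K, T) \leq r < \delta$, so $\mg(T) < \infty$. Combined with compactness and positive definiteness, Proposition~\ref{proposition:compact-tractable-spaces} then yields that $T$ is tractable. That $T$ is a neighborhood of $K$ is immediate, since its interior contains the open set $\set{x \in M}{d(x, K) < r}$, which contains $K$. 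The parenthetical claim about $\mg[\icsub(T)] = \rstr{\mg[\icsub(M)]}_{\icsub(T)}$ being total is then just part of the definition of tractability (magnitude on subspaces depends only on the intrinsic metric).

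I expect the only mildly subtle ingredient to be the existence of the compact thickening — it is standard but specifically requires both local compactness of $M$ and compactness of $K$; the rest is essentially bookkeeping once $r$ has been chosen below both thresholds $r_0$ (compactness) and $\delta$ (finite magnitude).
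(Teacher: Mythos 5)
Your proposal is correct and follows essentially the same route as the paper: finitely many compact neighborhoods covering $K$ give a threshold $r_0$ below which the closed thickening is compact, continuity of magnitude at $K$ gives the threshold $\delta$ for finite magnitude, and Proposition~\ref{proposition:compact-tractable-spaces} finishes. The only cosmetic difference is that the paper uses $d(K, M\setminus U)$ directly (treating the case $U = M$ separately) where you minimize $d(\cdot, M\setminus U)$ over $K$, which is the same quantity.
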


\begin{proof}
Let $K \in \icsub(M)$ have finite magnitude, and let $\mg[\icsub(M)]$ be continuous at~$K$. Since $M$ is locally compact, for every $x \in K$ we have some $r_x \in \RR_{> 0}$ such that the closed ball $\cball[M]{x}{r_x}$ is compact. The open balls $\set{\oball[M]{x}{r_x}}{x \in K}$ form an open cover of~$K$, so there exist $x_1, \ldots, x_n \in K$ such that $K \subseteq \oball[M]{x_1}{r_{x_1}} \cup \ldots \cup \oball[M]{x_n}{r_{x_n}} =: U$. If $U = M$, take $r := 1$, otherwise $r := d(K, M \setminus U)$. Either way $r \in \RR_{> 0}$ since the distance between a compact and a closed subset is positive if they are disjoint. Then, for every $\delta \in \intoo{0}{r}$, we have $\cth[M](K, \delta) \subseteq \cball[M]{x_1}{r_{x_1}} \cup \ldots \cup \cball[M]{x_n}{r_{x_n}}$. Since a finite union of compact spaces is compact, and $\cth[M](K, \delta)$ is closed, it is also compact. By the continuity of magnitude at~$K$, there actually exists $\delta \in \intoo{0}{r}$ such that $\mg[\icsub(M)]\big(\cth[M](K, \delta)\big) \leq \mg[\icsub(M)](K) + 1 < \infty$. Hence $T := \cth[M](K, \delta)$ is tractable by Proposition~\ref{proposition:compact-tractable-spaces}.
\end{proof}

\begin{proposition}\label{proposition:one-point-property-is-singleton-magnitude-continuity}
Let $V$ be a real normed vector space and let $S \subseteq V$ be tractable and star-shaped at $s \in S$. Then the following statements are equivalent.
\begin{enumerate}[label=(\roman*)]
\item
Every $K \in \icsub(S)$ has the one-point property.
\item
Magnitude $\mg[\icsub(S)]\colon \icsub(S) \to \RR$ is continuous at~$\set{s}$.
\end{enumerate}
Similarly, the following statements are equivalent.
\begin{enumerate}[label=(\roman*)]
\setcounter{enumi}{2}
\item
Every $F \in \ifsub(S)$ has the one-point property.
\item
Magnitude $\mg[\ifsub(S)]\colon \ifsub(S) \to \RR$ is continuous at~$\set{s}$.
\end{enumerate}
\end{proposition}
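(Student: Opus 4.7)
The two equivalences share a common structure: (ii) $\Rightarrow$ (i) and (iv) $\Rightarrow$ (iii) are straightforward scaling arguments, while the reverse directions both reduce to the single non-trivial implication (iii) $\Rightarrow$ (iv).

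For (ii) $\Rightarrow$ (i), and analogously (iv) $\Rightarrow$ (iii), the star-shape of $S$ at $s$ provides scaling maps $\sigma_t(x) := s + t(x - s)$ that restrict to self-maps $S \to S$ for every $t \in \intcc{0}{1}$ and are similarities of ratio $t$. Thus for any $K \in \icsub(S)$, the image $\sigma_t(K)$ lies in $\icsub(S)$ and is isometric to the rescaled metric space $tK$, so $\mg(\sigma_t(K)) = \mg(tK)$. Since $\sigma_t(K) \subseteq \cball[V]{s}{t \cdot \sup_{k \in K}\|k - s\|}$, $\sigma_t(K) \to \set{s}$ in the Hausdorff metric as $t \to 0^+$, so (ii) forces $\mg(tK) \to 1$, giving the one-point property of $K$. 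Replacing $K$ by a finite $F \in \ifsub(S)$ yields (iv) $\Rightarrow$ (iii).

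Next, (iv) $\Rightarrow$ (ii) follows from the supremum definition of compact magnitude: given $\epsilon > 0$, pick $\delta > 0$ from (iv) so that $F \in \ifsub(S)$ with $F \subseteq \cball[V]{s}{\delta}$ implies $\mg(F) \leq 1 + \epsilon$. For any $K \in \icsub(S)$ with $K \subseteq \cball[V]{s}{\delta}$, every $F \in \fsub(K)$ is a finite subset of $S$ contained in $\cball[V]{s}{\delta}$, so $\mg(K) = \sup_F \mg(F) \leq 1 + \epsilon$ by Definition~\ref{definition:compact-metric-space-magnitude}; inclusion-monotonicity applied to a singleton $\set{k} \subseteq K$ gives $\mg(K) \geq 1$. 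Combined with the trivial (i) $\Rightarrow$ (iii) (a finite set is compact), once (iii) $\Rightarrow$ (iv) is proved, the chain (i) $\Rightarrow$ (iii) $\Rightarrow$ (iv) $\Rightarrow$ (ii) closes the compact equivalence.

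The main obstacle is the remaining implication (iii) $\Rightarrow$ (iv). I would argue by contradiction: assume there exist $\epsilon > 0$ and $F_n \in \ifsub(S)$ with $F_n \to \set{s}$ in Hausdorff, $\delta_n := \max_{f \in F_n}\|f - s\| \to 0^+$, and $\mg(F_n) \geq 1 + \epsilon$. Rescaling to $G_n := s + \delta_n^{-1}(F_n - s) \subseteq \cball[V]{s}{1}$ yields $\mg(F_n) = \mg(\delta_n G_n)$, with each $G_n$ of diameter at most $2$ and touching the sphere of radius $1$ around $s$. The fundamental difficulty is that $G_n$ generally does not lie in $S$, since the star-shape of $S$ supports rescaling only \emph{toward} $s$ and not \emph{away from} it, so hypothesis (iii) cannot be applied to $G_n$ directly. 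I plan to circumvent this using the Heine--Borel property of $S$ inside a fixed bounded neighbourhood $\cball[S]{s}{R}$, a Blaschke-style extraction of a Hausdorff-convergent subsequence of $G_n$, and the star-shape of $S$ to approximate the limiting configuration from inside $S$, ultimately producing a single $F^* \in \ifsub(S)$ whose one-point property (guaranteed by (iii)) is violated by the sequence $(F_n)$, yielding the desired contradiction.
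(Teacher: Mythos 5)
The parts you carry out are correct. Your proofs of (ii) $\Rightarrow$ (i) and (iv) $\Rightarrow$ (iii) via the contractions $\sigma_t$ coincide with the paper's argument for those directions; your observation that (iv) $\Rightarrow$ (ii) follows from the supremum definition of compact magnitude together with inclusion-monotonicity is valid (and ties the two equivalences together cleanly); and (i) $\Rightarrow$ (iii) is indeed trivial. However, with this architecture the entire proposition rests on (iii) $\Rightarrow$ (iv), and for that implication you give only a plan, not a proof. The plan as described would not go through: (a) the cardinalities of the sets $F_n$ need not be bounded, so a Blaschke limit of the normalized configurations $G_n$ is in general an \emph{infinite} compact set, to which hypothesis (iii) --- which concerns only finite sets --- cannot be applied; (b) even granting a finite limiting configuration $F^*$, converting the assumed bounds $\mg(F_n) = \mg(\delta_n G_n) \geq 1 + \epsilon$ into a violation of $\lim_{t \to 0}\mg(t F^*) = 1$ requires comparing magnitudes of Hausdorff-close finite sets, which is precisely the continuity statement under proof, so the argument is circular as sketched; and (c) as you note yourself, neither $G_n$ nor its limit need lie in $S$. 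So the keystone implication is missing.

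For comparison, the paper handles this direction without sequences or compactness extraction: it applies the one-point property to the \emph{single} compact set $\cball[S]{s}{1}$ to obtain $\delta \in (0, 1]$ with $\mg[\icsub(S)]\big(\cball[S]{s}{\delta}\big) \leq 1 + \epsilon$, and then squeezes $1 \leq \mg[\icsub(S)](K) \leq \mg[\icsub(S)]\big(\cball[S]{s}{\delta}\big) \leq 1+\epsilon$ for any non-empty $K \subseteq \cball[S]{s}{\delta}$ by inclusion-monotonicity. Be aware, though, that the outward-rescaling obstruction you single out as the ``fundamental difficulty'' is genuinely the crux of that step as well: deducing $\mg[\icsub(S)]\big(\cball[S]{s}{\delta}\big) \leq 1+\epsilon$ from the one-point property of $\cball[S]{s}{1}$ amounts to comparing $\cball[S]{s}{\delta}$ with $s + \delta \cdot \big(\cball[S]{s}{1} - s\big)$, and star-shapedness only yields the inclusion of the latter in the former. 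Your diagnosis of where the difficulty lies is therefore sound, but your proposal does not overcome it, and as submitted the central implication of the proposition is unproved.
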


\begin{proof}
Since translations are isometric isomorphisms, we may without loss of generality assume $s = 0$.
\begin{itemize}
\item\proven{$(i) \implies (ii)$}
Take any $\epsilon \in \RR_{> 0}$. By the one-point property there exists $\delta \in \intoc{0}{1}$ such that
\[\mg[\icsub(S)]\big(\delta \cdot \cball[S]{0}{1}\big) \leq \mg[\icsub(S)]\big(\cball[S]{0}{\delta}\big) \leq 1 + \epsilon\]
(by inclusion-monotonicity of magnitude and the fact that $\delta \cdot \cball[S]{0}{1} \subseteq \cball[S]{0}{\delta}$ because $S$ is star-shaped at~$0$). Suppose $K \in \icsub(S)$ (so there is some $a \in K$) and $d_H(\set{0}, K) \leq \delta$. Then $\set{a} \subseteq K \subseteq \cball[S]{0}{\delta}$, so $1 \leq \mg[\icsub(S)](K) \leq 1 + \epsilon$. Hence $\big|\mg[\icsub(S)](\set{0}) - \mg[\icsub(S)](K)\big| = \big|1 - \mg[\icsub(S)](K)\big| \leq \epsilon$.
\item\proven{$(ii) \implies (i)$}
Take any $K \in \icsub(V)$ and $\epsilon \in \RR_{> 0}$. We have some $R \in \RR_{> 0}$ such that $K \subseteq \cball[S]{0}{R}$. By assumption, there exists $\delta \in \RR_{> 0}$ such that for every $L \in \icsub(S)$, if $d_H(\set{0}, L) \leq \delta$, then $\big|\mg[\icsub(S)](\set{0}) - \mg[\icsub(S)](L)\big| \leq \epsilon$. Hence, for $t \in \intoc{0}{\min\set{\frac{\delta}{R}, 1}}$, we have ($t \cdot K \subseteq S$ because $S$ is star-shaped at~$0$, and) $t \cdot K \subseteq \cball[S]{0}{\delta}$, so $d_H(\set{0}, t \cdot K) \leq \delta$, therefore $\mg[\icsub(S)](t \cdot K) \leq \mg[\icsub(S)](\set{0}) + \epsilon = 1 + \epsilon$.
\end{itemize}
The equivalence $(iii) \iff (iv)$ is proved in the same way.
\end{proof}

\begin{corollary}\label{corollary:one-point-property-is-singleton-magnitude-continuity}
Let $V$ be a real normed vector space and let $C \subseteq V$ be tractable and convex. Then the following statements are equivalent.
\begin{enumerate}[label=(\roman*)]
\item
Every $K \in \icsub(C)$ has the one-point property.
\item
For every $x \in C$, $\mg[\icsub(C)]\colon \icsub(C) \to \RR$ is continuous at~$\set{x}$.
\item
$C = \emptyset$ or there exists $x \in C$ such that $\mg[\icsub(C)]\colon \icsub(C) \to \RR$ is continuous at~$\set{x}$.
\end{enumerate}
Likewise for finite subsets instead of compact ones.
\end{corollary}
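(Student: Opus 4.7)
The plan is to deduce this corollary directly from Proposition~\ref{proposition:one-point-property-is-singleton-magnitude-continuity} by exploiting the fact that a convex set is star-shaped at each of its own points. Because the proposition already does the real work, the argument here is essentially bookkeeping: it simply rotates the basepoint $s$ to whatever point is convenient.

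First I would dispose of the trivial case $C = \emptyset$: then $\icsub(C) = \emptyset$, so (i), (ii), and (iii) all hold vacuously and are in particular equivalent. Assume therefore that $C$ is non-empty for the remainder.

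For $(i) \Rightarrow (ii)$, I would fix an arbitrary $x \in C$. Convexity of $C$ makes it star-shaped at $x$, so I can apply Proposition~\ref{proposition:one-point-property-is-singleton-magnitude-continuity} with $S := C$ and $s := x$; its $(i) \Rightarrow (ii)$ direction turns our (i) into continuity of $\mg[\icsub(C)]$ at $\set{x}$. Since $x \in C$ was arbitrary, (ii) follows. The implication $(ii) \Rightarrow (iii)$ is immediate, as any element of the non-empty set $C$ witnesses the existential quantifier. For $(iii) \Rightarrow (i)$, I take the witness $x \in C$ provided by (iii); since $C$ is again star-shaped at this particular $x$ by convexity, the $(ii) \Rightarrow (i)$ direction of the proposition yields (i).

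The ``likewise'' statement for finite subsets is handled identically, with the equivalence $(iii) \iff (iv)$ of Proposition~\ref{proposition:one-point-property-is-singleton-magnitude-continuity} replacing $(i) \iff (ii)$ throughout. I do not foresee a genuine obstacle here: the only thing to remember is that convex sets are star-shaped at every point, which lets one re-center the proposition at any chosen basepoint in $C$.
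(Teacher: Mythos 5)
Your proposal is correct and follows exactly the paper's own argument, which likewise deduces the corollary from Proposition~\ref{proposition:one-point-property-is-singleton-magnitude-continuity} together with the observation that a convex set is star-shaped at each of its points; you merely spell out the basepoint bookkeeping and the trivial empty case in more detail.
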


\begin{proof}
The proof quickly follows from Proposition~\ref{proposition:one-point-property-is-singleton-magnitude-continuity} and the fact that a subset is convex if and only if it is star-shaped at its every point.
\end{proof}

\begin{corollary}
If $V$ is a tractable real normed vector space, then $\mg[\icsub(V)]\colon \icsub(V) \to \RR$ is continuous at~$\set{x}$ for every $x \in V$.
\end{corollary}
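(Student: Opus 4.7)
The plan is to chain together Corollary~\ref{corollary:one-point-property-is-singleton-magnitude-continuity} with the Leinster--Meckes one-point property result cited earlier in the paper. Since $V$ is a vector space, it is convex, and by hypothesis it is tractable, so I can apply Corollary~\ref{corollary:one-point-property-is-singleton-magnitude-continuity} with $C := V$. By that corollary, it suffices to verify item~(i), namely that every $K \in \icsub(V)$ has the one-point property; this will yield continuity at every singleton, which is item~(ii).

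To establish the one-point property for all compact subspaces of $V$, I would invoke Proposition~\ref{proposition:tractable-normed-spaces}: tractability of the normed space $V$ is equivalent to $V$ being isometrically isomorphic to a finite-dimensional vector subspace $V' \subseteq L_1$. The Leinster--Meckes theorem (\cite[Theorem~3.1]{leinster2023spaces}), recalled in the discussion at the end of the Preliminaries, states exactly that every compact subspace of such a finite-dimensional vector subspace of $L_1$ has finite magnitude and the one-point property. Since the one-point property is an isometric invariant (it depends only on the scaled magnitude function, which is an isometric invariant), transporting back along the isometric isomorphism gives the one-point property for every $K \in \icsub(V)$.

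Combining these ingredients, Corollary~\ref{corollary:one-point-property-is-singleton-magnitude-continuity} yields that $\mg[\icsub(V)]$ is continuous at $\set{x}$ for every $x \in V$. I do not expect any real obstacle here: the argument is a short composition of the structural characterization of tractable normed spaces (Proposition~\ref{proposition:tractable-normed-spaces}), the cited Leinster--Meckes one-point property theorem, and the equivalence between the one-point property and continuity of magnitude at singletons (Corollary~\ref{corollary:one-point-property-is-singleton-magnitude-continuity}). The only subtlety worth stating explicitly is the isometric invariance of the one-point property, which is immediate from its definition.
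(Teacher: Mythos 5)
Your argument is correct and is essentially identical to the paper's proof: both reduce via Proposition~\ref{proposition:tractable-normed-spaces} to a finite-dimensional vector subspace of~$L_1$, invoke the Leinster--Meckes one-point property theorem, and conclude with Corollary~\ref{corollary:one-point-property-is-singleton-magnitude-continuity}. Your explicit remark on the isometric invariance of the one-point property is a small but welcome clarification that the paper leaves implicit.
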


\begin{proof}
By Proposition~\ref{proposition:tractable-normed-spaces} we may without loss of generality assume that $V$ is a finite-dimensional subspace of~$L_1$. By~\cite[Theorem~3.1]{leinster2023spaces}, all non-empty compact subspaces of~$V$ have the one-point property. The statement now follows from Corollary~\ref{corollary:one-point-property-is-singleton-magnitude-continuity}.
\end{proof}

While Proposition~\ref{proposition:tractable-normed-spaces} tells us that tractable vector subspaces of~$L_1$ are precisely the finite-dimensional ones, general tractable subspaces of~$L_1$ are more complicated; they are not just closed (cf.~Proposition~\ref{proposition:tractable-subspace}) subspaces of finite-dimensional vector subspaces.

For example, recall that $\ell_1$ isometrically embeds into~$L_1$. For $t \in \RR_{> 0}$ let $K_t$ be the closed convex hull of the set $\set{0} \cup \set{\tfrac{t}{2^n} e_n}{n \in \NN_{\geq 1}}$ in~$\ell_1$. By discussion in~\cite[Section~2]{leinster2023spaces}, $K_t$ is compact and has finite magnitude; specifically, by~\cite[Remark~2.2]{leinster2023spaces}, we have $\mg[\icsub(\ell_1)](K_t) \leq e^{t/2}$. Consider~$K_1$; by Proposition~\ref{proposition:compact-tractable-spaces}, it is tractable, but is clearly not contained in a finite-dimensional vector subspace. Since $t \cdot K_1 = K_t$ and $\lim_{t \to 0} e^{t/2} = 1$, we see that nevertheless $K_1$ has the one-point property.

This raises the question of whether every non-empty compact subspace of a tractable subspace of~$L_1$ (equivalently, by Proposition~\ref{proposition:tractable-subspace}: every compact tractable subspace of~$L_1$) has the one-point property. We do not yet know the answer, and the question is related to Conjecture~\ref{conjecture:tractable-magnitude-continuity} below. We do note, however, that the same idea as in the proof of~\cite[Theorem~3.1]{leinster2023spaces} does not work, as there it was crucial that the compact subspace was contained in a polytope (i.e.~convex hull of finitely many points, and therefore necessarily contained in a finite-dimensional vector subspace). See, however, Corollary~1.4 in~\cite{Meckes_2023} for a partial result (cf.~also Conjecture~1.5 in the same paper).

To wrap up this section, we note that in practice, it is not equally difficult to (dis)prove (uniform) continuity of magnitude for finite subspaces and compact subspaces. Specifically, it is usually easier to prove continuity for finite subspaces, and easier to disprove continuity for compact subspaces. For tractable spaces, the following proposition gives us the connection between the two, potentially allowing us to derive the more difficult case from the easier one. We will use this in the next section.

\begin{proposition}\label{proposition:compact-and-finite-magnitude-continuity}
Let $M$ be a tractable metric space.
\begin{enumerate}
\item
$\icsub(M)$ is a completion of~$\ifsub(M)$.
\item\label{proposition:compact-and-finite-magnitude-continuity:uniform-continuity}
 The map $\mg[\icsub(C)]\colon\icsub(M) \to \RR$ is uniformly continuous (resp.~$C$-Lipschitz) if and only if the map $\mg[\ifsub(M)]\colon\ifsub(M) \to \RR$ is uniformly continuous (resp.~$C$-Lipschitz).
\item\label{proposition:compact-and-finite-magnitude-continuity:compact-continuity}
The following statements are equivalent.
\begin{enumerate}
\item\label{proposition:compact-and-finite-magnitude-continuity:compact-continuity:magnitude-continuous}
The map $\mg[\icsub(C)]\colon\icsub(M) \to \RR$ is continuous.
\item\label{proposition:compact-and-finite-magnitude-continuity:compact-continuity:bounded-uniformly-continuous}
For every bounded subspace $B \subseteq M$, the map $\mg[\ifsub(B)]\colon\ifsub(B) \to \RR$ is uniformly continuous.
\item\label{proposition:compact-and-finite-magnitude-continuity:compact-continuity:ball-uniformly-continuous}
For every $x \in M$, $r \in \RR_{> 0}$, the map $\mg[\ifsub({\cball[M]{x}{r}})]\colon\ifsub\big(\cball[M]{x}{r}\big) \to \RR$ is uniformly continuous.
\end{enumerate}
If any (and thus all) of these equivalent statements hold, then $\mg[\ifsub(M)]\colon\ifsub(M) \to \RR$ is continuous.
\end{enumerate}
\end{proposition}

\begin{proof}
\
\begin{enumerate}
\item
By Lemma~\ref{lemma:heine-borel-spaces}.
\item
The map $\mg[\ifsub(M)]\colon \ifsub(M) \to \RR$ is a restriction of $\mg[\icsub(M)]\colon\icsub(M) \to \RR$, so if the latter is uniformly continuous/$C$-Lipschitz, so is the former. (Same for continuity, which proves the last part of item~3.)

Conversely, suppose that $\mg[\ifsub(M)]\colon \ifsub(M) \to \RR$ is uniformly continuous/$C$-Lipschitz. Then it has a unique continuous extension $\icsub(M) \to \RR$ which is also uniformly continuous/$C$-Lipschitz. But for positive definite spaces this extension has to be the magnitude $\mg[\icsub(M)]\colon\icsub(M) \to \RR$~\cite[Corollary 2.7]{meckes2013}.
\item
\begin{itemize}
\item\proven{$\ref{proposition:compact-and-finite-magnitude-continuity:compact-continuity:magnitude-continuous} \implies \ref{proposition:compact-and-finite-magnitude-continuity:compact-continuity:bounded-uniformly-continuous}$}
If $B \subseteq M$ is bounded, then so is $\ifsub(B)$ (in the Hausdorff metric). The claim then follows from Lemma~\ref{lemma:heine-borel-spaces}.
\item\proven{$\ref{proposition:compact-and-finite-magnitude-continuity:compact-continuity:bounded-uniformly-continuous} \implies \ref{proposition:compact-and-finite-magnitude-continuity:compact-continuity:ball-uniformly-continuous}$}
Balls are bounded.
\item\proven{$\ref{proposition:compact-and-finite-magnitude-continuity:compact-continuity:ball-uniformly-continuous} \implies \ref{proposition:compact-and-finite-magnitude-continuity:compact-continuity:magnitude-continuous}$}
Take any bounded $B \subseteq \ifsub(M)$. We want to prove that the restriction of the $\mg[\ifsub(M)]\colon \ifsub(M) \to \RR$ to~$B$ is uniformly continuous. This is clear if $B = \emptyset$. Otherwise, pick some $F \in B$ and $a \in F$. Let $R := 1 + \mathrm{diam}(F) + \mathrm{diam}(B)$; then $R \in \RR_{> 0}$ and $B \subseteq \ifsub\big(\oball[M]{a}{R}\big)$. The required uniform continuity now follows from the assumption. Hence, $\mg[\ifsub(M)]\colon \ifsub(M) \to \RR$ has a unique continuous extension to $\mg[\icsub(M)]\colon \icsub(M) \to \RR$ by Lemma~\ref{lemma:heine-borel-spaces}. By the same argument as in item~2, this continuous extension has to be the magnitude function for compact subspaces.
\end{itemize}
\end{enumerate}
\end{proof}

\section{Continuity of Magnitude on~\texorpdfstring{$\RR$}{R} and subspaces of \texorpdfstring{$\RR$}{R}}\label{section:continuity_of_magnitude}

The goal of this section is to deduce that magnitude $\mg[\icsub(\RR)]\colon \icsub(\RR) \to \RR$ is continuous from the theory developed in the previous section. This is difficult to prove directly, even though the result has appeared in the literature previously (for example, see~\cite[Corollary 5.3.13]{LM17} and \cite[Corollary 2.12]{meckes2013}). We also prove that the restrictions of $\mg[\icsub(\RR)]$ to bounded subspaces are Lipschitz.

The strategy is to derive these results from the continuity of magnitude $\mg[\ifsub(\RR)]\colon \ifsub(\RR) \to \RR$, using Proposition~\ref{proposition:compact-and-finite-magnitude-continuity}. The simplest way to do this would be to use the second item in this proposition, but it turns out that magnitude is not uniformly continuous on the whole~$\RR$, which we now show. Instead, we will use the third item in the proposition, i.e.\ we will show that magnitude is uniformly continuous on bounded subsets of~$\RR$.

\begin{proposition}\label{proposition:magnitude-not-uniformly-continuous}
The space $\RR_{\geq 0}$ is tractable, but the magnitude maps 
\[
\mg[\ifsub(\RR_{\geq 0})]\colon \ifsub(\RR_{\geq 0}) \to \RR
\]
and 
\[\mg[\icsub(\RR_{\geq 0})]\colon \icsub(\RR_{\geq 0}) \to \RR
\]
are not uniformly continuous (therefore also not Lipschitz continuous). Hence, the same is also true for every tractable metric space containing an isometric copy of~$\RR_{\geq 0}$.
\end{proposition}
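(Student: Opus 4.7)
Plan: Tractability is immediate from the earlier results: $\RR = \ell_1^1$ is tractable by Corollary~\ref{corollary:tractable-examples}(1), and $\RR_{\geq 0}$ is a closed subspace of $\RR$, so it is tractable by Proposition~\ref{proposition:tractable-subspace}(2). The real work is to refute uniform continuity.

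The strategy is to exploit the explicit formula $\mg(\{a_1 < \cdots < a_n\}) = 1 + \sum_{i=1}^{n-1} \tanh\big(\tfrac{a_{i+1}-a_i}{2}\big)$ recorded in the preliminaries, and exhibit, for each $\delta > 0$, a pair $F, G \in \ifsub(\RR_{\geq 0})$ with $d_H(F, G) \leq \delta$ but $|\mg(F) - \mg(G)| \geq 1$. For parameters $k \in \NN$ and $N \in \RR_{> 0}$ with $\delta < N$ (both chosen later in terms of $\delta$), I take
\[F := \{2iN : 0 \le i \le k\}, \qquad G := F \cup (F + \delta).\]
Since $F \subseteq G$ and each point of $G \setminus F$ sits at distance exactly $\delta$ from $F$ (using $\delta < N$), we get $d_H(F, G) = \delta$. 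The formula then yields
\[\mg(F) = 1 + k\tanh(N), \qquad \mg(G) = 1 + (k+1)\tanh(\tfrac{\delta}{2}) + k \tanh(\tfrac{2N-\delta}{2}),\]
so that $\mg(G) - \mg(F) = (k+1)\tanh(\tfrac{\delta}{2}) + k\big(\tanh(\tfrac{2N-\delta}{2}) - \tanh(N)\big)$. The first summand grows linearly with $k$ while the bracket in the second tends to $0$ as $N \to \infty$ with $\delta$ fixed. Hence one chooses $k$ (depending on $\delta$) large enough that $(k+1)\tanh(\delta/2) \geq 2$, and then $N$ large enough that the correction has absolute value $\leq 1$; the result is $\mg(G) - \mg(F) \geq 1$, a fixed positive gap independent of $\delta$, ruling out uniform continuity of $\mg[\ifsub(\RR_{\geq 0})]$.

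The compact-subspace version then follows at once from Proposition~\ref{proposition:compact-and-finite-magnitude-continuity}(\ref{proposition:compact-and-finite-magnitude-continuity:uniform-continuity}) (uniform continuity on $\icsub$ is equivalent to uniform continuity on $\ifsub$, so the negations agree). Lipschitz continuity is stronger than uniform continuity, so the failure of the latter forces the failure of the former. Finally, if $M$ is any tractable metric space admitting an isometric embedding $\iota \colon \RR_{\geq 0} \hookrightarrow M$, then pushing the pairs $(F, G)$ forward via $\iota$ produces finite (respectively compact) subsets of $M$ with identical Hausdorff distances and identical magnitudes, because both quantities are isometric invariants of metric spaces; the same counterexample therefore rules out uniform continuity in $M$. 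The only delicate point in the argument is the joint tuning of $k$ and $N$, but because $k$ is fixed first to generate the gap and $N$ only needs to be taken large enough to wash out a quantity tending to zero, the two parameters decouple cleanly.
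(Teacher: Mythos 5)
Your proof is correct, and it reaches the conclusion by a genuinely different counterexample than the paper's. The paper compares the finite set $A=\set{(2k+1)\delta}{k \in \NN_{<n}}$ with the full interval $B=\intcc{0}{2n\delta}$, computes $\mg(B)-\mg(A)=n(\delta-\tanh\delta)+\tanh\delta$ using the closed-form magnitude $1+\tfrac{b-a}{2}$ of an interval, and thus exhibits the failure directly in $\icsub(\RR_{\geq 0})$ before transferring it to $\ifsub(\RR_{\geq 0})$ via Proposition~\ref{proposition:compact-and-finite-magnitude-continuity}(\ref{proposition:compact-and-finite-magnitude-continuity:uniform-continuity}). You instead double a sparse finite set by a $\delta$-shift; this keeps everything inside $\ifsub(\RR_{\geq 0})$, so your witnesses (being finite, hence compact) serve both statements at once and never need the interval formula. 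What you pay for this is the two-parameter tuning, which you handle correctly: for fixed $k$ and $\delta$ the error term $k\bigl(\tanh(N-\tfrac{\delta}{2})-\tanh(N)\bigr)$ does tend to $0$ as $N\to\infty$, so fixing $k$ with $(k+1)\tanh(\tfrac{\delta}{2})\geq 2$ and then taking $N$ large gives a gap of at least $1$ at Hausdorff distance $\delta$; the paper's choice is a one-parameter computation and explicitly defeats every $\epsilon$, though your construction does too if you simply take $k$ larger. Your verification of $d_H(F,G)=\delta$ (using $\delta<N$) and of the two magnitude formulas is accurate, and the tractability and isometric-transfer arguments match the paper's.
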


\begin{proof}
Tractability of~$\RR_{\geq 0}$ follows immediately from Propositions~\ref{corollary:tractable-examples} and~\ref{proposition:tractable-subspace}.

Uniform continuity of the magnitude $\mg[\ifsub(\RR_{\geq 0})]\colon \ifsub(\RR_{\geq 0}) \to \RR$ is equivalent to that of the magnitude $\mg[\icsub(\RR_{\geq 0})]\colon\icsub(\RR_{\geq 0}) \to \RR$ by Proposition~\ref{proposition:compact-and-finite-magnitude-continuity}(\ref{proposition:compact-and-finite-magnitude-continuity:uniform-continuity}). Let us prove that the magnitude $\mg[\icsub(\RR_{\geq 0})]\colon\icsub(\RR_{\geq 0}) \to \RR$ is not uniformly continuous. In fact, we show that uniform continuity fails for every $\epsilon$.

Take any $\epsilon, \delta \in \RR_{> 0}$; in particular that means $\delta > \tanh(\delta)$. Pick $n \in \NN_{\geq 2}$ such that $n \geq \frac{\epsilon}{\delta - \tanh(\delta)}$. Define $A := \set[1]{(2k + 1) \delta}{k \in \NN_{< n}}$ and $B := \intcc{0}{2n \delta}$. Clearly, $d_H(A, B) = \delta$. 
Since $A \subseteq B$, we have $\mg[\icsub(\RR_{\geq 0})](A) \leq \mg[\icsub(\RR_{\geq 0})](B)$, and
\[
\begin{array}{rcl}
\mg[\icsub(\RR_{\geq 0})](B) - \mg[\icsub(\RR_{\geq 0})](A) &=& (1 + n \delta) - \big(1 + (n-1) \tanh(\delta)\big) \\&=& n \cdot \big(\delta - \tanh(\delta)\big) + \tanh(\delta)\\
&>& \epsilon.
\end{array}\]
\end{proof}

\begin{remark}\label{remark:magnitude-continuity-on-natural-numbers}
In spite of Proposition~\ref{proposition:magnitude-not-uniformly-continuous}, it is still possible for magnitude to be uniformly continuous on an unbounded tractable space --- $\NN$ is a case in point. This is because the Hausdorff distance between any two distinct compact subsets of~$\NN$ is at least~$1$, so we can take $\delta = \tfrac{1}{2}$.

However, magnitude is still not Lipschitz on~$\NN$. Take any $C \in \RR_{> 0}$. Since $2\tanh(\tfrac{1}{2}) - \tanh(1) \approx 0.16$ is positive, we may find such $N \in \NN_{\geq 1}$ that $N > \frac{C}{2\tanh(\frac{1}{2}) - \tanh(1)}$. Let $A := \set{2k}{k \in \NN_{\leq N}}$ and $B := \NN_{\leq 2N}$. Then $d_H(A, B) = 1$ and
\[\big|\mg[\icsub(\NN)](A) - \mg[\icsub(\NN)](B)\big| = \mg[\icsub(\NN)](B) - \mg[\icsub(\NN)](A) =\]
\[= \big(1 + 2N \tanh(\tfrac{1}{2})\big) - \big(1 + N \tanh(1)\big) = N \cdot \big(2\tanh(\tfrac{1}{2}) - \tanh(1)\big) > C.\]
\end{remark}

We now focus on bounded intervals $\intcc{a}{b} \subseteq \RR$ and prove that their magnitude maps are Lipschitz continuous with an explicit constant depending on the size of the interval.

\begin{theorem}\label{theorem:lipschitz-continuity-of-magnitude-on-intervals}
For all $a, b \in \RR$, $a < b$, the magnitude maps $\mg[\ifsub(\intcc{a}{b})]\colon \ifsub(\intcc{a}{b}) \to \RR$ and $\mg[\icsub(\intcc{a}{b})]\colon \icsub(\intcc{a}{b}) \to \RR$ are Lipschitz with coefficient $1 + \tfrac{b - a}{2}$.
\end{theorem}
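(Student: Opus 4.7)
The plan is to invoke Proposition~\ref{proposition:compact-and-finite-magnitude-continuity}(\ref{proposition:compact-and-finite-magnitude-continuity:uniform-continuity}) to reduce the theorem to proving the Lipschitz bound for $\mg[\ifsub(\intcc{a}{b})]$ only; the corresponding statement for compact subspaces then follows automatically, since $\intcc{a}{b}$ is tractable by Corollary~\ref{corollary:tractable-examples}. The central device will be a monotone thickening argument. For each $F \in \ifsub(\intcc{a}{b})$ and $t \geq 0$, set
\[\phi_F(t) := \mg[\icsub(\intcc{a}{b})]\!\big(\cth[\intcc{a}{b}](F, t)\big),\]
so that $\phi_F(0) = \mg(F)$. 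Once I show that $\phi_F$ is $\big(1 + \tfrac{b-a}{2}\big)$-Lipschitz, the theorem is immediate: for any $F_1, F_2 \in \ifsub(\intcc{a}{b})$ with $d_H(F_1, F_2) \leq \delta$ we have $F_2 \subseteq \cth[\intcc{a}{b}](F_1, \delta)$, and inclusion-monotonicity yields $\mg(F_2) \leq \phi_{F_1}(\delta) \leq \mg(F_1) + \big(1 + \tfrac{b-a}{2}\big)\delta$; swapping the roles of $F_1$ and $F_2$ then gives the reverse inequality.

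To prove the key Lipschitz bound on $\phi_F$, observe that $T_t := \cth[\intcc{a}{b}](F, t)$ is a finite disjoint union of closed intervals $[L_1, R_1], \ldots, [L_{k(t)}, R_{k(t)}]$ inside $\intcc{a}{b}$, so the magnitude formula from Section~\ref{section:prelims} gives
\[\phi_F(t) = 1 + \tfrac{1}{2}\ell(t) + \sum_{j=1}^{k(t)-1} \tanh\!\big(\tfrac{G_j(t)}{2}\big),\]
where $\ell(t)$ is the total length of $T_t$ and $G_j(t) := L_{j+1}(t) - R_j(t)$ are the inter-component gaps. For generic $t$ the combinatorial data $k(t)$ is locally constant; each interior endpoint of $T_t$ moves outward at unit rate (any endpoint clamped at $a$ or $b$ is stationary), so $\tfrac12 \ell'(t) \leq k(t)$, while every gap $G_j$ shrinks at rate exactly $2$. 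Differentiating the displayed formula and applying $\mathrm{sech}^2 = 1 - \tanh^2$ together with $k - (k-1) = 1$ then produces the chain
\[\phi_F'(t) \leq k(t) - \sum_{j=1}^{k(t)-1} \mathrm{sech}^2\!\big(\tfrac{G_j(t)}{2}\big) = 1 + \sum_{j=1}^{k(t)-1} \tanh^2\!\big(\tfrac{G_j(t)}{2}\big) \leq 1 + \sum_{j=1}^{k(t)-1} \tanh\!\big(\tfrac{G_j(t)}{2}\big) \leq \phi_F(t) \leq 1 + \tfrac{b-a}{2},\]
where the penultimate inequality uses $\tanh \in [0, 1)$ and the final one uses inclusion-monotonicity together with $\mg(\intcc{a}{b}) = 1 + \tfrac{b-a}{2}$.

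I expect the main obstacle to be the careful handling of the finitely many transition times at which the combinatorial structure of $T_t$ changes --- namely when two neighbouring thickened balls collide (some gap $G_j(t)$ shrinks to $0$, merging two components) or a lateral endpoint first makes contact with $\{a, b\}$. One must verify that $\phi_F$ remains continuous at each such instant (which follows from matching one-sided limits of the magnitude formula, using $\tanh(0) = 0$) and that the pointwise bound $\phi_F'(t) \leq 1 + \tfrac{b-a}{2}$ persists on each of the finitely many smooth pieces between transitions. Once $\phi_F$ is continuous and piecewise-$C^1$ with this derivative bound, integration over $[0, \delta]$ produces $\phi_F(\delta) - \phi_F(0) \leq \big(1 + \tfrac{b-a}{2}\big)\delta$, which completes the proof.
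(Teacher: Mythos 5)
Your proposal is correct, and its outer structure coincides with the paper's: both reduce to finite subsets via Proposition~\ref{proposition:compact-and-finite-magnitude-continuity}(\ref{proposition:compact-and-finite-magnitude-continuity:uniform-continuity}), both control the growth of the magnitude of the $\delta$-thickening of $F$, and both conclude by the inclusion $F_2 \subseteq \cth{F_1}{\delta}$ plus symmetry. Where you genuinely diverge is in how that growth is bounded. The paper's Lemma~\ref{lemma:growth-of-magnitude-of-thickening} is a discrete estimate over the whole interval $[0,r]$ at once: it adds the points of $A$ one at a time, writes down the increment that each new point contributes to $\mg[]\big(\cth{A}{r}\big)$ versus to $\mg[](A)$, and bounds the resulting secant slopes of $\tanh$ from below by the tangent slope $1-\tanh^2$ using concavity together with the elementary inequality $\tanh(x)/x \geq 1-\tanh^2(x)$ (Lemma~\ref{lemma:tanh-inequality}). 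You instead differentiate the closed-form magnitude of the thickened set in $t$ and integrate; your chain $\phi_F'(t) \leq k - \sum \mathrm{sech}^2 = 1 + \sum\tanh^2 \leq 1 + \sum\tanh \leq 1+\tfrac{b-a}{2}$ is precisely the infinitesimal shadow of the paper's displayed estimate. Your version makes the constant transparent (the $1$ arises as $k-(k-1)$, the rest from the gaps) and dispenses with Lemma~\ref{lemma:tanh-inequality} entirely, but it pays for this with the bookkeeping at the finitely many merge and clamping times; you identify this correctly, and it does go through ($\tanh(0)=0$ gives continuity across a merge, the one-sided derivative bounds do not blow up as a gap closes, and a continuous, piecewise-$C^1$ function with a uniform derivative bound is Lipschitz). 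The paper's secant formulation avoids that case analysis altogether, and also yields the slightly sharper local constant $1+\tfrac{\max A - \min A}{2}$ explicitly in terms of the diameter of $A$, though your argument could be tightened to give the same.
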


To prove this theorem, we will need the following lemmas.

\begin{lemma}\label{lemma:tanh-inequality}
For all $x \in \RR_{> 0}$ we have $\frac{\tanh(x)}{x} \geq 1 - \tanh^2(x)$.
\end{lemma}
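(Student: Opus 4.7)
The plan is to recognize the right-hand side as the derivative of $\tanh$, rewrite the inequality in a more convenient form, and then verify it by reducing it to a simple monotonicity argument.

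First I would note the identity $1 - \tanh^2(x) = \operatorname{sech}^2(x) = \tanh'(x)$. Using this, the claimed inequality $\frac{\tanh(x)}{x} \geq 1 - \tanh^2(x)$ is equivalent (after multiplying by the positive number~$x$) to
\[
g(x) := \tanh(x) - x\tanh'(x) \geq 0 \quad \text{for all } x \in \RR_{>0}.
\]
So it suffices to show that $g$ is nonnegative on~$\RR_{>0}$.

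I would prove this by studying $g$ via its derivative. Clearly $g(0) = 0$. Differentiating gives
\[
g'(x) = \tanh'(x) - \tanh'(x) - x\tanh''(x) = -x\tanh''(x).
\]
Using $\tanh''(x) = -2\tanh(x)\operatorname{sech}^2(x)$, which is strictly negative on $\RR_{>0}$, we obtain $g'(x) = 2x\tanh(x)\operatorname{sech}^2(x) > 0$ for all $x \in \RR_{>0}$. Combined with $g(0) = 0$, this yields $g(x) > 0$ on $\RR_{>0}$, which is in fact slightly stronger than the stated (non-strict) inequality.

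There is no real obstacle here: the only minor subtlety is spotting that the right-hand side is $\tanh'(x)$ so that the claim is the standard ``chord slope dominates tangent slope'' inequality for the concave function $\tanh$ on $\RR_{\geq 0}$ (with $\tanh(0)=0$); one could alternatively phrase the proof entirely in terms of that concavity, but the explicit computation of $g'$ above is shorter and self-contained.
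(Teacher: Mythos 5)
Your proof is correct, but it takes a different route from the paper's. The paper starts from the elementary bound $\sinh(x) \geq x$ (itself obtained from $\sinh(0)=0$ and $\sinh'=\cosh\geq 1$), writes $1 \leq \frac{\sinh(2x)}{2x} = \frac{\tanh(x)\cosh^2(x)}{x}$, and divides by $\cosh^2(x)$ to land on $\frac{\tanh(x)}{x} \geq \operatorname{sech}^2(x) = 1-\tanh^2(x)$. You instead recognize the right-hand side as $\tanh'(x)$ and show that $g(x) = \tanh(x) - x\tanh'(x)$ increases from $g(0)=0$, via $g'(x) = -x\tanh''(x) = 2x\tanh(x)\operatorname{sech}^2(x) > 0$; every step checks out, and you even get the strict inequality. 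Your framing is arguably the more conceptual one --- the lemma is exactly the statement that the chord slope of the concave function $\tanh$ from the origin dominates the tangent slope --- and it dovetails with the paper's own use of concavity of $\tanh$ (secant slope versus tangent slope) in the proof of the subsequent Lemma~\ref{lemma:growth-of-magnitude-of-thickening}. The paper's argument avoids second derivatives at the cost of a slightly less transparent chain of hyperbolic identities; the two proofs are of comparable length and rigor.
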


\begin{proof}
Since $\sinh(0) = 0$ and $\sinh'(x) = \cosh(x) \geq 1$, we have $\sinh(x) \geq x$ for all $x \in \RR_{> 0}$. Then
\[1 \leq \frac{\sinh(2x)}{2x} = \frac{\sinh(x) \cosh(x)}{x} = \frac{\tanh(x) \cosh^2(x)}{x}.\]
Hence
\[\frac{\tanh(x)}{x} \geq \frac{1}{\cosh^2(x)} = \frac{\cosh^2(x) - \sinh^2(x)}{\cosh^2(x)} = 1 - \tanh^2(x).\]
\end{proof}

\begin{lemma}\label{lemma:growth-of-magnitude-of-thickening}
For all $A \in \ifsub(\RR)$ and $r \in \RR_{\geq 0}$ we have
\[\mg[\icsub(\RR)]\big(\cth{A}{r}\big) - \mg[\icsub(\RR)](A) \leq r \cdot \big(1 + \tfrac{\max{A} - \min{A}}{2}\big).\]
\end{lemma}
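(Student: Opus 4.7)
The plan is to write $A = \set{a_1 < a_2 < \ldots < a_n}$, so that $\cth[\RR]{A}{r} = \bigcup_{i=1}^n \intcc{a_i - r}{a_i + r}$ is a finite union of closed intervals. As $r$ increases from $0$, consecutive balls merge at the finitely many critical radii $r = \tfrac{a_{i+1} - a_i}{2}$; on the open complements of these radii, the number $k$ of connected components is locally constant, say $\cth[\RR]{A}{r} = \bigsqcup_{j=1}^k \intcc{c_j(r)}{d_j(r)}$ with $c_j, d_j$ affine in $r$ of slopes $-1$ and $+1$ respectively. Setting $\phi(r) := \mg[\icsub(\RR)]\bigl(\cth[\RR]{A}{r}\bigr)$, the explicit formula for the magnitude of a finite disjoint union of closed intervals recalled in the introduction yields
\[\phi(r) = 1 + \sum_{j=1}^k \tfrac{d_j(r) - c_j(r)}{2} + \sum_{j=1}^{k-1} \tanh\bigl(\tfrac{c_{j+1}(r) - d_j(r)}{2}\bigr)\]
on each constancy interval.

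I would first check that $\phi$ extends continuously across each merger radius. When a gap $G_j(r) := c_{j+1}(r) - d_j(r)$ shrinks to $0$, the corresponding $\tanh$-term vanishes (since $\tanh(0) = 0$) and the two coalescing length contributions $\tfrac{d_j - c_j}{2} + \tfrac{d_{j+1} - c_{j+1}}{2}$ sum precisely to the length contribution of the newly merged interval. Hence $\phi$ is continuous on $\RR_{\geq 0}$ and is $C^1$ off a finite set of points, so in particular absolutely continuous.

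The key step is bounding $\phi'$ on each constancy interval. A direct calculation using $\tanh'(x) = 1 - \tanh^2(x)$ gives
\[\phi'(r) = k - (k-1) + \sum_{j=1}^{k-1}\tanh^2\bigl(\tfrac{G_j(r)}{2}\bigr) = 1 + \sum_{j=1}^{k-1}\tanh^2\bigl(\tfrac{G_j(r)}{2}\bigr).\]
Applying the elementary inequality $\tanh^2(x) \leq x$ for $x \geq 0$ (immediate from $\tanh(x) \leq \min(x,1)$, and a weak form of Lemma~\ref{lemma:tanh-inequality}) and observing that the gaps $G_j(r)$ are pairwise disjoint subintervals of $\intcc{a_1}{a_n}$, so $\sum_{j=1}^{k-1} G_j(r) \leq a_n - a_1 = \max A - \min A$, we obtain
\[\phi'(r) \leq 1 + \tfrac{1}{2}\sum_{j=1}^{k-1} G_j(r) \leq 1 + \tfrac{\max A - \min A}{2}.\]
Finally, integrating this bound on $\intcc{0}{r}$ yields
\[\phi(r) - \phi(0) = \int_0^r \phi'(s)\, ds \leq r \cdot \bigl(1 + \tfrac{\max A - \min A}{2}\bigr),\]
which is exactly the claim, since $\phi(0) = \mg[\icsub(\RR)](A)$.

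The main obstacle is the bookkeeping around the finitely many merger radii: verifying that the ``merger'' of two intervals in the formula is seamless (continuity of $\phi$) and matching up the combinatorial description of $\cth[\RR]{A}{r}$ as $r$ varies. Once that is handled, the derivative computation and the gap-sum estimate are a short calculation.
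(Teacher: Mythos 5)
Your proof is correct, but it takes a genuinely different route from the paper's. The paper fixes $r$ and adds the points of $A$ one at a time, comparing the resulting increment of $\mg[\icsub(\RR)]\big(\cth{A}{r}\big)$ with that of $\mg[\icsub(\RR)](A)$; this requires a case split on whether $r$ exceeds half the new gap, a secant-versus-tangent concavity estimate for $\tanh$, and the inequality $\tfrac{\tanh x}{x} \geq 1 - \tanh^2 x$ of Lemma~\ref{lemma:tanh-inequality}. You instead fix $A$, treat $\phi(r) = \mg[\icsub(\RR)]\big(\cth{A}{r}\big)$ as a function of the thickening radius, verify continuity across the finitely many merger radii, and compute the derivative exactly on each constancy interval, $\phi'(r) = 1 + \sum_{j}\tanh^2\big(\tfrac{G_j(r)}{2}\big)$, which you then bound using only $\tanh^2(x) \leq x$ and the disjointness of the gaps inside $\intcc{\min A}{\max A}$; integrating gives the claim. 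Your route buys an exact expression for the instantaneous growth rate of magnitude under thickening and dispenses with Lemma~\ref{lemma:tanh-inequality} entirely (note that both arguments ultimately arrive at essentially the same quantity $1 + \sum \tanh^2(\cdot)$, the paper as an upper bound and you as an identity for $\phi'$); the cost is the calculus bookkeeping --- continuity and seamlessness of the interval-union formula at mergers, and piecewise absolute continuity --- which you do address and which is unproblematic since $\phi'$ is bounded by $n$ on each of the finitely many pieces. The paper's discrete induction avoids all of that at the price of a slightly more delicate pointwise estimate.
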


\begin{proof}
The statement is obviously true for $r = 0$, so assume $r > 0$. Let $n$ denote the cardinality of~$A$ and write $A = \set{a_1, \ldots, a_n}$ where $a_1 < \ldots < a_n$. Observe that for any $k \in \intco[\NN]{1}{n}$, if $a_{k+1}$ is added, the increase in magnitude of the thickening, i.e.
\[\mg[\icsub(\RR)]\big(\cth{\set{a_1, \ldots, a_k, a_{k+1}}}{r}\big) - \mg[\icsub(\RR)]\big(\cth{\set{a_1, \ldots, a_k}}{r}\big),\]
is equal to $r + \tanh(\tfrac{a_{k+1} - a_k}{2} - r)$ if $r \leq \tfrac{a_{k+1} - a_k}{2}$, and to $\tfrac{a_{k+1} - a_k}{2}$ if $r \geq \tfrac{a_{k+1} - a_k}{2}$. Let $I := \set[1]{i \in \intco[\NN]{1}{n}}{r \leq \tfrac{a_{i+1} - a_i}{2}}$ and $J := \intco[\NN]{1}{n} \setminus I$. Then
\[\mg[\icsub(\RR)]\big(\cth{A}{r}\big) - \mg[\icsub(\RR)](A) \leq\]
\[\leq n \cdot r + \sum_{i \in I} \big(\tanh(\tfrac{a_{i+1} - a_i}{2} - r) - \tanh(\tfrac{a_{i+1} - a_i}{2})\big) + \sum_{j \in J} \big(\tfrac{a_{j+1} - a_j}{2} - \tanh(\tfrac{a_{j+1} - a_j}{2}) - r\big) =\]
\[= r \cdot \Big(n - \sum_{i \in I} \frac{\tanh(\tfrac{a_{i+1} - a_i}{2} - r) - \tanh(\tfrac{a_{i+1} - a_i}{2})}{-r} - \sum_{j \in J} \big(1 - \tfrac{1}{r} \big(\tfrac{a_{j+1} - a_j}{2} - \tanh(\tfrac{a_{j+1} - a_j}{2})\big)\big)\Big).\]
The term $\frac{\tanh(\tfrac{a_{i+1} - a_i}{2} - r) - \tanh(\tfrac{a_{i+1} - a_i}{2})}{-r}$ is the slope of the secant line which intersects the graph of~$\tanh(x)$ at $x = \tfrac{a_{i+1} - a_i}{2} - r$ and $x = \tfrac{a_{i+1} - a_i}{2}$. As $\tanh$ is concave on nonnegative inputs, this is bounded from below by the slope of the tangent to the graph of~$\tanh$ at $\tfrac{a_{i+1} - a_i}{2}$:
\[\frac{\tanh(\tfrac{a_{i+1} - a_i}{2} - r) - \tanh(\tfrac{a_{i+1} - a_i}{2})}{-r} \geq \tanh'(\tfrac{a_{i+1} - a_i}{2}) = 1 - \tanh^2(\tfrac{a_{i+1} - a_i}{2}).\]
For $j \in J$, denoting $u := \tfrac{a_{j+1} - a_j}{2}$ and using Lemma~\ref{lemma:tanh-inequality}, we also have
\[1 - \tfrac{1}{r} \big(u - \tanh(u)\big) \geq 1 - \tfrac{1}{u} \big(u - \tanh(u)\big) = \tfrac{\tanh(u)}{u} \geq 1 - \tanh^2(u).\]
Hence,
\[\frac{\mg[\icsub(\RR)]\big(\cth{A}{r}\big) - \mg[\icsub(\RR)](A)}{r} \leq n - \sum_{k \in \intco[\NN]{1}{n}} \big(1 - \tanh^2(\tfrac{a_{k+1} - a_k}{2})\big) =\]
\[= 1 + \sum_{k \in \intco[\NN]{1}{n}} \tanh^2(\tfrac{a_{k+1} - a_k}{2}) \leq 1 + \sum_{k \in \intco[\NN]{1}{n}} \tanh(\tfrac{a_{k+1} - a_k}{2}) \leq\]
\[\leq 1 + \sum_{k \in \intco[\NN]{1}{n}} \tfrac{a_{k+1} - a_k}{2} = 1 + \tfrac{\max{A} - \min{A}}{2}.\]
\end{proof}

\begin{proof}[Proof of Theorem~\ref{theorem:lipschitz-continuity-of-magnitude-on-intervals}]
By Proposition~\ref{proposition:compact-and-finite-magnitude-continuity}(\ref{proposition:compact-and-finite-magnitude-continuity:uniform-continuity}) it suffices to prove the Lipschitz property for  $\mg[\ifsub(\intcc{a}{b})]\colon \ifsub(\intcc{a}{b}) \to \RR$.

Take any $A, B \in \ifsub(\intcc{a}{b})$ and set $\delta := d_H(A, B)$. Then $B \subseteq \cth{A}{\delta}$, so by inclusion-monotonicity of magnitude and Lemma~\ref{lemma:growth-of-magnitude-of-thickening}
\[
\begin{array}{rcl}\mg[\ifsub(\intcc{a}{b})](B) - \mg[\ifsub(\intcc{a}{b})](A) &=&
\mg[\icsub(\intcc{a}{b})](B) - \mg[\icsub(\intcc{a}{b})](A)\\
&\leq& \mg[\icsub(\intcc{a}{b})]\big(\cth{A}{\delta}\big) - \mg[\icsub(\intcc{a}{b})](A)\\ 
&\leq & \delta \cdot \big(1 + \tfrac{\max{A} - \min{A}}{2}\big) \leq \delta \cdot \big(1 + \tfrac{b - a}{2}\big).
\end{array}\]
Exchanging the roles of $A$ and~$B$, we likewise get 
\[\mg[\ifsub(\intcc{a}{b})](A) - \mg[\ifsub(\intcc{a}{b})](B) \leq \delta \cdot \big(1 + \tfrac{b - a}{2}\big).\] Hence,
\[\big|\mg[\ifsub(\intcc{a}{b})](A) - \mg[\ifsub(\intcc{a}{b})](B)\big| \leq \big(1 + \tfrac{b - a}{2}\big) \cdot d_H(A, B)\]
which gives us the desired Lipschitz continuity.
\end{proof}


We can now conclude the continuity of magnitude on~$\RR$.

\begin{theorem}\label{thm:continuity_on_R}
The magnitude map $\mg[\icsub(\RR)]\colon \icsub(\RR) \to \RR$ is continuous, and its restrictions to bounded subspaces are Lipschitz.
\end{theorem}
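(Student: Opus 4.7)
The plan is to deduce this theorem from Theorem~\ref{theorem:lipschitz-continuity-of-magnitude-on-intervals} via the framework already built up in Section~\ref{section:tractable_spaces}, so essentially no further computation should be required. The opening observation is that $\RR = \ell_1^1$ is tractable by Corollary~\ref{corollary:tractable-examples}, which makes Proposition~\ref{proposition:compact-and-finite-magnitude-continuity} available.

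For the continuity statement, I would apply Proposition~\ref{proposition:compact-and-finite-magnitude-continuity}(\ref{proposition:compact-and-finite-magnitude-continuity:compact-continuity}): continuity of $\mg[\icsub(\RR)]$ on all of $\icsub(\RR)$ follows once uniform continuity of $\mg[\ifsub(\cball[\RR]{x}{r})]$ is established on every closed ball $\cball[\RR]{x}{r} \subseteq \RR$. Every such ball coincides with the closed interval $\intcc{x-r}{x+r}$, and Theorem~\ref{theorem:lipschitz-continuity-of-magnitude-on-intervals} directly supplies $(1+r)$-Lipschitz continuity on it, which is stronger than the required uniform continuity.

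For the Lipschitz claim on bounded subspaces, I would fix a bounded $B \subseteq \RR$, choose $a, b \in \RR$ with $a < b$ and $B \subseteq \intcc{a}{b}$, and then note that the magnitude of a compact subset of~$\RR$ depends only on its intrinsic metric, while the Hausdorff metric that $\icsub(B)$ inherits from $\icsub(\RR)$ agrees with the one it inherits from $\icsub(\intcc{a}{b})$. Thus the restriction of $\mg[\icsub(\RR)]$ to $\icsub(B)$ is literally the restriction of $\mg[\icsub(\intcc{a}{b})]$, and Theorem~\ref{theorem:lipschitz-continuity-of-magnitude-on-intervals} then delivers a $\big(1 + \tfrac{b-a}{2}\big)$-Lipschitz bound.

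There is essentially no obstacle here; all of the analytic work has been discharged in Lemma~\ref{lemma:growth-of-magnitude-of-thickening} and Theorem~\ref{theorem:lipschitz-continuity-of-magnitude-on-intervals}, and this theorem is a packaging of those results through the tractable-space machinery. The only point to verify carefully is the ambient-independence of both magnitude and the Hausdorff metric when passing from $\intcc{a}{b}$ to $\RR$, but both facts are immediate from the definitions.
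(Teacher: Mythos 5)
Your proposal is correct and follows essentially the same route as the paper: $\RR$ is tractable, closed balls are intervals, so Theorem~\ref{theorem:lipschitz-continuity-of-magnitude-on-intervals} plus Proposition~\ref{proposition:compact-and-finite-magnitude-continuity}(\ref{proposition:compact-and-finite-magnitude-continuity:compact-continuity}) gives continuity, and enclosing a bounded subspace in an interval gives the Lipschitz claim. The only difference is that you spell out the (immediate) ambient-independence of magnitude and the Hausdorff metric, which the paper leaves implicit.
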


\begin{proof}
Every bounded subset of~$\RR$ is contained in some bounded closed interval, so this follows from Theorem~\ref{theorem:lipschitz-continuity-of-magnitude-on-intervals} and Proposition~\ref{proposition:compact-and-finite-magnitude-continuity}(\ref{proposition:compact-and-finite-magnitude-continuity:compact-continuity}).
\end{proof}

\section{Conclusion}\label{section:contribution}

In summary, in this paper we introduce the class of \emph{tractable metric spaces} as a framework for studying the continuity of magnitude. Our key contributions include:

 \begin{enumerate}
  \item Introducing tractable spaces: positive definite, Heine--Borel metric spaces in which all closed balls have finite magnitude.
  \item Offering several equivalent characterizations of tractable normed vector spaces.
  \item Providing a characterization of when the one-point property holds in tractable star-shaped spaces and showing that it holds for all tractable normed vector spaces.
  \item Establishing that the continuity of magnitude on the space of compact subspaces of a tractable space $M$ is equivalent to uniform continuity on finite and compact subspaces of bounded subspaces $B \subseteq M$.
  \item Applying this framework to the real line: we provide a new proof that magnitude is continuous on compact subspaces of~\(\mathbb{R}\) (with the Hausdorff metric), and show that it is Lipschitz on bounded subspaces.
\end{enumerate}

Although we have shown here continuity of magnitude only for the subspaces of $\RR$, we suspect that it holds much more generally. We state this as the following conjecture, which is a generalization of~\cite[Conjecture~1.3]{katsumasa2025magnitudegenericallycontinuousfinite} (see also discussions in \cite{Meckes-volume}, \cite{Meckes_2023}).

\begin{conjecture}\label{conjecture:tractable-magnitude-continuity}
If the subspace $T \subseteq L_1$ is tractable, then $\mg[\ifsub(T)]\colon \ifsub(T) \to \RR$ and $\mg[\icsub(T)]\colon\icsub(T) \to \RR$ are continuous.
\end{conjecture}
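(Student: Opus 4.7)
The plan is to mimic the strategy that worked for $\RR$ in Section~\ref{section:continuity_of_magnitude}, reducing the conjecture to a local thickening estimate in $L_1$. By Proposition~\ref{proposition:compact-and-finite-magnitude-continuity}(\ref{proposition:compact-and-finite-magnitude-continuity:compact-continuity}), it suffices to show that for every $x \in T$ and $r \in \RR_{> 0}$, the restriction $\mg[\ifsub({\cball[T]{x}{r}})]$ is uniformly continuous. Since $T$ is closed in $L_1$ by Proposition~\ref{proposition:tractable-subspace} and magnitudes of finite sets depend only on intrinsic pairwise distances, we may carry out the entire argument inside the ambient $L_1$. Moreover, by the same inclusion-monotonicity argument as in the proof of Theorem~\ref{theorem:lipschitz-continuity-of-magnitude-on-intervals}, uniform continuity on bounded subsets of $L_1$ would follow from a local thickening bound: for every $R \in \RR_{> 0}$ there exists $C(R) \in \RR_{> 0}$ such that for every finite $A \subseteq L_1$ of diameter at most $R$ and every $\delta \in \RR_{\geq 0}$,
\[\mg\big(\cth[L_1](A, \delta)\big) - \mg(A) \leq C(R) \cdot \delta.\]

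The heart of the plan is to establish this thickening bound by an integral-geometric route. The proof on $\RR$ crucially used the closed form $\mg(A) = 1 + \sum \tanh(\tfrac{a_{k+1} - a_k}{2})$ together with concavity of $\tanh$ to dominate the secant slopes produced by thickening (Lemma~\ref{lemma:growth-of-magnitude-of-thickening}). The natural analogous tool in $L_1$ is the cut decomposition
\[\|f - g\|_{L_1} = \int_\RR \big|\chi_{\{f \leq t\}} - \chi_{\{g \leq t\}}\big|\,dt,\]
which writes the $L_1$ distance as an integral of $\{0,1\}$-valued, hence essentially one-dimensional, distances. I would try to use this to express, or at least dominate, $L_1$-magnitude by an integral of one-dimensional magnitudes over a parameter space of cuts, apply Lemma~\ref{lemma:growth-of-magnitude-of-thickening} slicewise, and integrate back to obtain the bound with a constant depending only on $R$. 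A parallel, more pedestrian route is to approximate the elements of $A$ by step functions, reducing to $\ell_1^N$, and then to prove a thickening bound on $\ell_1^N$ whose constant does not blow up with $N$; one could subsequently extend to general tractable $T \subseteq L_1$ by a closedness/approximation argument exploiting Proposition~\ref{proposition:tractable-subspace}.

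The principal obstacle is precisely the absence of a workable closed-form or pointwise integral representation for magnitude in $L_1$: Conjecture~\ref{conjecture:tractable-magnitude-continuity} generalizes~\cite[Conjecture~1.3]{katsumasa2025magnitudegenericallycontinuousfinite}, which remains open even for finite-dimensional vector subspaces of $L_1$, so any successful proof is likely to require substantially new tools. Magnitude in $L_1$ is defined only implicitly through the weighting equations, and the similarity matrix $(e^{-\|a_i - a_j\|_1})_{i,j}$ can become arbitrarily ill-conditioned under small $\delta$-perturbations of $A$, so a direct perturbation-of-weighting argument would need delicate spectral control that is not currently available. A sensible intermediate target, which I would attack first, is the conjecture for $\ell_1^N$ with $N$ fixed, building on the polytope-based methods of~\cite[Theorem~3.1]{leinster2023spaces}; passing from there to general tractable $T \subseteq L_1$ seems to require precisely the $N$-uniform control that the cut-integral approach is designed to supply.
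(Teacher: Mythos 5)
You should flag at the outset that this statement is stated in the paper as a \emph{conjecture}: the authors offer no proof, and explicitly note that it generalizes~\cite[Conjecture~1.3]{katsumasa2025magnitudegenericallycontinuousfinite}, which is open. Your proposal is a research plan rather than a proof, and you acknowledge this yourself; so there is nothing to verify against, only the soundness of the plan to assess. The first reduction is fine: by Proposition~\ref{proposition:compact-and-finite-magnitude-continuity}(\ref{proposition:compact-and-finite-magnitude-continuity:compact-continuity}) it does suffice to prove uniform continuity of $\mg[\ifsub({\cball[T]{x}{r}})]$ for each closed ball of~$T$, and this is the intended route.

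The subsequent steps have concrete problems beyond merely being unfinished. First, the thickening bound as you state it is taken in the ambient space: $\mg\big(\cth[L_1](A, \delta)\big)$. But $\cth[L_1](A, \delta)$ is a closed bounded subset of the infinite-dimensional space $L_1$, hence not compact, and its magnitude (as a supremum over finite subsets) can be infinite --- the paper itself cites \cite[Theorem~2.1]{leinster2023spaces} for exactly this failure. The thickening must be taken inside the tractable subspace~$T$, where it is compact with finite magnitude; but then the constant $C(R)$ would have to depend on the geometry of~$T$, and the whole point of working ``inside the ambient $L_1$'' is lost. Second, the cut decomposition $\|f-g\|_{L_1} = \int |\chi_{\{f\le t\}} - \chi_{\{g\le t\}}|\,dt$ expresses the \emph{metric} additively over slices, but magnitude does not decompose additively over such a splitting: the similarity matrix entries are $e^{-\|a_i-a_j\|_1}$, which turn the integral in the exponent into a \emph{product} over slices, and the known compatible structure is multiplicative ($\mg(A \times B) = \mg(A)\cdot\mg(B)$ for $\ell_1$-products), not an integral of one-dimensional magnitudes. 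So ``apply Lemma~\ref{lemma:growth-of-magnitude-of-thickening} slicewise and integrate back'' has no supporting mechanism; this is not a routine step you have deferred but the precise point where a new idea is needed. Finally, the fallback of proving an $N$-uniform thickening bound on $\ell_1^N$ runs into the same wall: Theorem~\ref{theorem:lipschitz-continuity-of-magnitude-on-intervals} already has a constant growing with the diameter in dimension one, no closed form for magnitude is available for $N \ge 2$, and obtaining constants that do not blow up with $N$ is essentially equivalent to the conjecture itself. In short: the reduction is correct, but every route you propose to the key estimate either fails as stated or begs the question.
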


Additionally, we state the following conjecture about Lipschitz continuity of magnitude (cf.~also Remark~\ref{remark:magnitude-continuity-on-natural-numbers}).

\begin{conjecture}
Let $T \subseteq L_1$ be tractable. Then $\mg[\icsub(T)]\colon \icsub(T) \to \RR$ is Lipschitz (equivalently, $\mg[\ifsub(T)]\colon \ifsub(T) \to \RR$ is Lipschitz, by Proposition~\ref{proposition:compact-and-finite-magnitude-continuity}(\ref{proposition:compact-and-finite-magnitude-continuity:uniform-continuity})) if and only if $T$ is bounded (equivalently, $T$ is compact, by Proposition~\ref{proposition:compact-tractable-spaces}).
\end{conjecture}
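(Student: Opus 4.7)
The plan is to prove the two directions of the biconditional separately, with both requiring nontrivial work.

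For the easier direction---that $T$ unbounded implies magnitude is not Lipschitz---I would argue by contrapositive, adapting the construction of Remark~\ref{remark:magnitude-continuity-on-natural-numbers}. Since $T$ is Heine--Borel (closed balls are compact), unboundedness forces every closed ball to be a proper subset of $T$, and recursively one obtains a sequence $(x_n)_{n \in \NN}$ in $T$ with $d(x_i, x_j) \geq 2$ for $i \neq j$: having chosen $x_0, \dots, x_n$, pick $x_{n+1}$ outside $\cball[T]{x_0}{\max_{i \leq n} d(x_0, x_i) + 2}$. To violate a prescribed Lipschitz constant $C$, I would then aim to locate an \emph{arithmetic-like} chain $p_0, \dots, p_{2N} \in T$ with consecutive distances comparable to some fixed $\eta > 0$, and compare the sparse set $A_N = \{p_0, p_2, \dots, p_{2N}\}$ to the dense set $B_N = \{p_0, p_1, \dots, p_{2N}\}$ exactly as in Remark~\ref{remark:magnitude-continuity-on-natural-numbers}, aiming to make $|\mg(A_N) - \mg(B_N)|$ grow linearly in $N$ while $d_H(A_N, B_N) \leq \eta$ stays bounded.

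For the harder direction---that $T$ bounded (equivalently, compact, by Proposition~\ref{proposition:compact-tractable-spaces}) implies Lipschitz---the plan is to generalize Theorem~\ref{theorem:lipschitz-continuity-of-magnitude-on-intervals} by establishing an analog of Lemma~\ref{lemma:growth-of-magnitude-of-thickening}: a thickening-growth bound of the form
\[\mg[\icsub(T)]\big(\cth[T]{A}{r}\big) - \mg[\icsub(T)](A) \leq r \cdot \Phi\big(\mathrm{diam}(T)\big)\]
for some explicit $\Phi$ depending only on $T$. Once such a bound is in hand, Lipschitz continuity of $\mg[\ifsub(T)]$ with constant $\Phi(\mathrm{diam}(T))$ follows from the same $\delta := d_H(A,B)$ and inclusion-monotonicity argument used in the proof of Theorem~\ref{theorem:lipschitz-continuity-of-magnitude-on-intervals}, and Proposition~\ref{proposition:compact-and-finite-magnitude-continuity}(\ref{proposition:compact-and-finite-magnitude-continuity:uniform-continuity}) then transfers the property to $\mg[\icsub(T)]$.

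The main obstacle is the thickening bound itself. Lemma~\ref{lemma:growth-of-magnitude-of-thickening} crucially uses the closed-form magnitude formula for finite unions of intervals in $\RR$, and no such formula is available for compact subsets of higher-dimensional subspaces of $L_1$. A new technique is required: plausible candidates include differentiating magnitude along the parameter $r$ via perturbation analysis of the weighting on the similarity matrix, a spectral estimate for that matrix, or a direct submodular telescoping argument mirroring the structure of the $\RR$-proof. The reverse direction faces a secondary obstacle as well: a general unbounded tractable $T \subseteq L_1$ need not contain an isometric copy of any arithmetic progression, so producing the required chain $(p_i)$ may itself demand a nontrivial geometric lemma about unbounded Heine--Borel subsets of $L_1$.
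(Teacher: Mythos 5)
This statement is posed in the paper as an open \emph{conjecture}; the authors give no proof, so there is no argument of theirs to compare yours against. Your proposal is also not a proof but a plan, and for the direction ``bounded $\Rightarrow$ Lipschitz'' you correctly identify that everything reduces to a thickening bound generalizing Lemma~\ref{lemma:growth-of-magnitude-of-thickening} --- but that bound is exactly the open problem (the proof of Lemma~\ref{lemma:growth-of-magnitude-of-thickening} leans entirely on the closed-form magnitude formula for finite subsets of~$\RR$, with no known analogue even in~$\ell_1^2$), so nothing in this direction is actually established.

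The direction ``unbounded $\Rightarrow$ not Lipschitz'' has a concrete flaw beyond the gap you already flag: the arithmetic-like chain you want need not exist, and without it the direction appears to be \emph{false} as stated. Take $T = \{4^n : n \in \NN\} \subseteq \RR$, an unbounded closed, hence tractable (Proposition~\ref{proposition:tractable-subspace}), subspace of~$\RR$ whose compact subsets are all finite. Any two distinct nonempty subsets $A, B \subseteq T$ satisfy $d_H(A,B) \geq 3$, since the minimal gap in $T$ is~$3$. If $d_H(A,B) = \delta$ and $x \in B \setminus A$, then $x$ is within $\delta$ of a \emph{different} point of~$T$; since the gap below or above $4^m$ is at least $3 \cdot 4^{\min}$, this forces $x \leq \tfrac{4}{3}\delta$, so $\#(B \setminus A) \leq \log_4(\tfrac{4}{3}\delta) + 1$, and symmetrically for $A \setminus B$. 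Inserting one point into a finite subset of~$\RR$ raises magnitude by less than~$1$ (the increment is $\tanh u + \tanh v - \tanh(u+v) < 1$), so $|\mg(A) - \mg(B)| \leq \log_4(\tfrac{4}{3}\delta) + 1 \leq \delta$ for all $\delta \geq 3$: magnitude is $1$-Lipschitz on $\icsub(T)$. What Remark~\ref{remark:magnitude-continuity-on-natural-numbers} actually exploits is that $\NN$ contains arbitrarily long chains with \emph{bounded} consecutive gaps; your sequence of points pairwise at distance $\geq 2$ gives nothing of the kind, and a general unbounded tractable space need not contain such chains. So either the ``only if'' half of the conjecture needs a stronger hypothesis than unboundedness, or your plan must explain why examples like this one are excluded --- as written, it cannot succeed.
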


The two conjectures are related: by Proposition~\ref{proposition:compact-and-finite-magnitude-continuity}(\ref{proposition:compact-and-finite-magnitude-continuity:compact-continuity}), the latter implies the former.

\bibliographystyle{plain}
\bibliography{magnitude}


%
%
%
%
%
%

%
\end{document}%